\newcommand{\RNum}[1]{\expandafter{\romannumeral #1\relax}}
\numberwithin{equation}{section}
\newcommand\asertion[1]{ssertion $({\mathrm{\romannumeral #1\relax}})$}
\newcommand{\asselm}{associative element}
\newcommand{\type}{type}
\newcommand{\bfs}{Without loss of generality we can assume}
\newcommand{\spr}{\mathbb{R}}
\newcommand{\spo}{\mathbb{O}}
\newcommand\huaa[1]{\mathscr{A}(#1)}
\newcommand\hua[3]{\mathscr{#1}^{#2}(#3)}
\newcommand\huac[1]{\mathscr{C}(#1)}
\newcommand\huacc[2]{\mathscr{C}^{#1}(#2)}
\newcommand\conjgt[1]{\overline{#1}}
\newcommand{\re}{\mathit{Re}\,}%
\newcommand\generat[1]{\left\langle #1\right\rangle}%
\newcommand{\Hom}{\text{Hom}}
\def\O{\mathbb{O}}
\def\R{\mathbb{R}}
\def\A{\mathbb{A}}
\def\C{C\ell_7}
\def\abs#1{\left|#1\right|}
\def\P{\mathcal{P}(n)}
\newcommand\clifd[1]{C\ell_{#1}}
\newtheorem{mydef}{Definition}[section]
\newtheorem{rem}[mydef]{Remark}
\newtheorem{eg}[mydef]{Example}
\newtheorem{cor}[mydef]{Corollary}
\newtheorem{prop}[mydef]{Proposition}
\newtheorem{lemma}[mydef]{Lemma}
\newtheorem{thm}[mydef]{Theorem}
\begin{document}

\title{Classification of left octonion modules} 
\author{Qinghai Huo, Yong Li, Guangbin Ren} 
\date{} 
\maketitle 

\begin{abstract}
It is natural to study   octonion Hilbert spaces  as the 
   recently swift development of the theory of quaternion Hilbert spaces. In order to do this, it is important to study first  its algebraic structure,  namely,  octonion modules. In this article, we provide complete classification of left octonion modules. 
   In contrast to the quaternionic setting, we encounter some new phenomena. That is, a submodule generated by one element $m$ may be the whole  module and may be not in the form $\O m$. This motivates us to introduce some new notions such as
   associative elements, conjugate associative elements, cyclic elements. We can characterize octonion modules in terms of these notions. 
   It turns out  that octonions  admit  two distinct structures of octonion modules,  and moreover, the direct sum of their   several copies exhaust all
    octonion modules with finite dimensions.

\end{abstract}
\noindent{\bf Keywords:}
	Octonion module; associative element; cyclic element; $\clifd{7}$-module.

\noindent{\bf AMS Subject Classifications:}
17A05
\tableofcontents 

\section{introduction}
The theory of quaternion Hilbert spaces  brings the classical theory of functional analysis into the non-commutative realm (see \cite{horwitz1993QHilbertmod,razon1992Uniqueness,razon1991projection,soffer1983quaternion,viswanath1971normal}).
 It arises some new notions such as   spherical spectrum,  which has potential applications in quantum mechanics  (see \cite{colombo2011noncomfunctcalculus,ghiloni2013slicefct}). All  these theories are based on   quaternion vector spaces, or more precisely, quaternion modules,  and quaternion bimodules.
 A systematic study of  quaternion modules is given by  Ng \cite{ng2007quaternionic}. It turns out that    the category of  (both one-sided and two-sided) quaternion Hilbert spaces is  equivalent to the category of real Hilbert spaces.  

 It is a natural question to study the   theory of octonion   spaces.  
 Goldstine and Horwitz in 1964 \cite{goldstine1964hilbert}
 initiated the study of octonion   Hilbert spaces; more recently, Ludkovsky \cite{ludkovsky2007algebras,ludkovsky2007Spectral} studied the algebras of operators in octonion  Banach spaces and spectral representations  in  octonion Hilbert  spaces. 
Although there are few results about the theory of octonion Hilbert spaces, 
  it is not full developed since it even lacks of  coherent definition of  octonion  Hilbert spaces.

  In contrast to the complex or quaternion setting,   some new phenomena occur in the setting of octonions (see Example \ref{eg:(e1,e2)=O}, \ref{eg:O+O-}):
  
  $\bullet$ If $m$ is an  element of an octonion module, then $\O m$ is not an octonion sub-module in general. 
  
  $\bullet$   If $m$ is an  element of an octonion module, then the octonion sub-module generated by $m$ maybe the whole   module.
  
  This means that the structure of  octonion module is more involved.  We point out that some gaps appear  in establishing the octonionic version of Hahn-Banach  Theorem by taking $\O m$ as a submodule   (\cite[Lemma 2.4.2]{ludkovsky2007algebras}).
  The submodule  generated by a submodule $Y$ and a point $x$ is not of the form $\{y+px\mid y\in Y,\;  p\in \O\}$,  this is wrong even for the case $Y=\{0\}$. It means the  proof can not repeat the way in canonical case.   
  The involved  structure of  octonion module  accounts for the slow developments of octonion Hilbert spaces.
  
  In the study of octonion Hilbert space and Banach space, it heavily depends on the direct sum structure of  the space under considered sometimes, which always brings the question  back to the classic situation.   For example, in the proof of  \cite[Theorem 2.4.1]{ludkovsky2007algebras}, it declares that every $\O$-vector space is of the following form:
  $$X=X_0\oplus X_1e_1\oplus\cdots\oplus X_{7}e_7.$$
  Note that therein the definition of $\O$-vector space is  actually a left $\O$-module with an irrelevant right $\O$-module structure. We thus can only consider the left $\O$-module structure of it. We show that the assertion above  does not always work.  In order to study octonion Hilbert spaces, we need to provide its solid algebraic  foundation by studying the deep structure of one-sided $\O$-modules and $\O$-bimodules.  
  We  only consider the left $\O$-modules in this paper, the bimodule case will be discussed in a later paper.
  In this paper, we characterize the structure of $\spo$-modules completely. 

We remark that Eilenberg \cite{eilenberg1948extensions} initiated the study of bimodules over non-associative rings. Jacobson studied the structures of bimodules over Jordan algebra and alternative algebra \cite{jacobson1954structure}.  One-sided modules over octonion  was  investigated  in \cite{goldstine1964hilbert} for studying octonion Hilbert spaces. However for the classification of $\spo$-modules, the problem is untouched.

  In this article, we shall give the classification of $\spo$-modules.
  It turns out that the set $\O$  admits two distinct $\O$-module structures. One is the canonical one, denoted by $\O$; the other is 
 denoted  by $\conjgt{\O}$ (see Example \ref{eg:simple O}). 
 To characterize  any $\spo$-module ${M}$, we need to introduce some new  notions, called  \textbf{associative element} and \textbf{conjugate associative element}. Their collections  are  denoted by $\huaa{M}$ and $\hua{A}{-}{M}$ respectively. The ordered pair of their dimensions as real vector spaces is called the type of $M$. These concepts are crucial in the classification  of left $\O$-modules.

  Our first main result is about the characterization of the left $\O$-modules.
  \begin{thm}\label{thm:strc thm}
  	Let $M$ be a left $\O$-module. Then  $$M=\spo\huaa{M}\oplus {\spo}\hua{A}{-}{M}.$$
  	If $\dim_{\R}{M}<\infty$, then
  	\begin{equation*}
  	M\cong \O^{ n_1}\oplus \overline{\spo}^{n_2},
  	\end{equation*}	 where  $(n_1,n_2)$ is the \type\ of $M$.
  \end{thm}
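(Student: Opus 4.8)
The plan is to recognize every left $\O$-module as a module over the Clifford algebra $\clifd{7}=C\ell_{0,7}$, classify those by Wedderburn theory, and then read off the statement using associative and conjugate associative elements. First I would show that a left $\O$-module structure on a real vector space $V$ is precisely a tuple $T_1,\dots,T_7\in\End_\R(V)$ with $T_i^2=-\mathrm{id}$ and $T_iT_j+T_jT_i=0$ for $i\neq j$, i.e.\ a $\clifd{7}$-module structure. Given a left $\O$-module, put $T_i:=L_{e_i}$, left multiplication by the imaginary unit $e_i$; the module identity $(a^2)m=a(am)$ linearizes to $(ab+ba)m=a(bm)+b(am)$, which for $a=e_i$, $b=e_j$ together with $e_ie_j+e_je_i=-2\delta_{ij}$ in $\O$ gives exactly the Clifford relations. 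Conversely, from such $T_i$ set $pm:=a_0m+\sum_i a_iT_i(m)$ for $p=a_0+\sum_i a_ie_i$; a short computation with the Clifford relations verifies $(p^2)m=p(pm)$, so this is a left $\O$-module, and the two assignments are mutually inverse.

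Next I would invoke the standard isomorphism $\clifd{7}\cong M_8(\R)\oplus M_8(\R)$. This algebra is semisimple with exactly two simple modules, each of real dimension $8$, so every left $\O$-module is a direct sum of copies of them and, when $\dim_\R M<\infty$, is $\cong S^{n_1}\oplus(S')^{n_2}$. It then remains to identify $S,S'$ with $\O$ and $\conjgt{\O}$ (Example~\ref{eg:simple O}). Both are simple left $\O$-modules: a nonzero $m$ generates because $x\mapsto xm$ is bijective on $\O$ (if $xm=0$ then $|m|^2x=(xm)\conjgt{m}=0$, since $x,m,\conjgt{m}$ lie in an associative subalgebra). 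They are non-isomorphic because the volume element $\Omega:=L_{e_1}\cdots L_{e_7}$ is an $\O$-module endomorphism of every left $\O$-module — it commutes with each $L_{e_j}$ since the $L_{e_i}$ anticommute pairwise and $7$ is odd — with $\Omega^2=\mathrm{id}$, so it acts as $\pm1$ on each simple module, and these signs are $\epsilon:=e_1(e_2(\cdots e_7))$ on $\O$ and $-\epsilon$ on $\conjgt{\O}$. Hence $\O$ and $\conjgt{\O}$ are the two simple left $\O$-modules and $M\cong\O^{n_1}\oplus\conjgt{\O}^{n_2}$ in the finite-dimensional case.

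Finally I would match this with the type. Since $\Omega^2=\mathrm{id}$, $M=M^+\oplus M^-$ with $M^\pm=\ker(\Omega\mp\epsilon\,\mathrm{id})$, both $\O$-submodules. If $m\in\huaa{M}$ then $p\mapsto pm$ is an $\O$-isomorphism $\O\to\O m$ (associativity of $m$ being exactly the needed identity), so $\Omega$ acts as $\epsilon$ on $\O m$ and $\huaa{M}\subseteq M^+$; likewise $p\mapsto\conjgt{p}\,m$ is an $\O$-isomorphism $\conjgt{\O}\to\O m$ for $m\in\hua{A}{-}{M}$, whence $\hua{A}{-}{M}\subseteq M^-$. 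Therefore $\O\huaa{M}\subseteq M^+$ and $\O\hua{A}{-}{M}\subseteq M^-$, so their sum in $M$ is direct. For the reverse inclusion I would pass to $M\cong\O^{n_1}\oplus\conjgt{\O}^{n_2}$: the set of associative elements is additive over direct sums, $\huaa{\O}=\R\cdot1$ (the nucleus of $\O$ is $\R$), $\huaa{\conjgt{\O}}=0$, and symmetrically $\hua{A}{-}{\conjgt{\O}}=\R\cdot1$ while $\hua{A}{-}{\O}=0$ (a nonzero conjugate associative element of $\O$ would give $\conjgt{\O}\cong\O m=\O$, contradicting $\O\not\cong\conjgt{\O}$). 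So $\huaa{M}$ is the span of the $n_1$ distinguished copies of $1$, giving $\dim_\R\huaa{M}=n_1$, $\O\huaa{M}=M^+$, and symmetrically $\O\hua{A}{-}{M}=M^-$. This yields $M=\O\huaa{M}\oplus\O\hua{A}{-}{M}$ with $(n_1,n_2)$ the type.

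The conceptual key is the identification with $\clifd{7}$-modules in the first step; granting that, everything reduces to the transparent module theory of $M_8(\R)\oplus M_8(\R)$, provided one works with the correct (left-alternative) module identity, which is what makes the Clifford relations come out. If instead one argued directly inside $\O\text{-}\mathbf{Mod}$, the main obstacle would be to prove that every left $\O$-module is semisimple with $\O$ and $\conjgt{\O}$ as its only simple objects; the volume-element computation distinguishing $\O$ from $\conjgt{\O}$ is the crux, and the remaining identification of $M^\pm$ with $\O\huaa{M}$ and $\O\hua{A}{-}{M}$ — rather than merely up to isomorphism — is then routine bookkeeping.
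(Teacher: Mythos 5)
Your proposal is correct, and its backbone --- turning a left $\O$-module into a $\clifd{7}$-module via the operators $L_{e_i}$ and invoking $\clifd{7}\cong M(8,\R)\oplus M(8,\R)$ together with Wedderburn theory --- is exactly the paper's route to the finite-dimensional classification (Theorem \ref{thm:O mod= Cl7 mod} and Corollary \ref{cor:fd O-M}). You genuinely diverge in the two remaining steps. First, to separate the two simple modules and to obtain $M=\O\huaa{M}\oplus\O\hua{A}{-}{M}$, you use the central volume element $\Omega=L_{e_1}\cdots L_{e_7}$ and its $\pm$-eigenspaces $M^{\pm}$; the paper instead distinguishes $\O$ from $\conjgt{\O}$ by computing $\huaa{\conjgt{\O}}=\{0\}$ (Example \ref{eg:simple O}) and derives the direct-sum decomposition from linear-independence lemmas for associative and conjugate associative elements (Lemma \ref{lem:free mod unique represt} and its conjugate analogue). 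Your eigenspace argument is arguably cleaner: it yields the intrinsic identifications $M^{+}=\O\huaa{M}$ and $M^{-}=\O\hua{A}{-}{M}$ once the reverse inclusions are transferred along an isomorphism, which is legitimate because $\huaa{\cdot}$ and $\hua{A}{-}{\cdot}$ are preserved under $\O$-isomorphisms (Proposition \ref{prop:f(huaaM) in huaaN}). Second, since the first identity is claimed for arbitrary $M$, your reverse inclusion must rest on the general fact that every module over a semisimple Artinian ring decomposes as a (possibly infinite) direct sum of simples; you assert this but then carry out the bookkeeping only in the finite-dimensional notation $\O^{n_1}\oplus\conjgt{\O}^{n_2}$. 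That step does extend (associativity and conjugate associativity are checked componentwise, and elements of a direct sum have finite support), but it should be said explicitly; the paper avoids the appeal to general semisimple module theory by showing every cyclic submodule $\generat{m}_\O$ has real dimension at most $128$ (Lemma \ref{lem:<m> is finite dim}) and bootstrapping from the finite-dimensional case. With that one point made explicit, your argument closes.
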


 Its proof depends heavily on the isomorphism
 between the category $O$-$\mathbf{Mod}$  and the category $\C$-$\mathbf{Mod}$. By the matrix realization of $\C$, $$\clifd{7}\cong M(8,\spr) \oplus M(8,\spr),$$   there are only two kinds of simple left $\spo$-module, namely, $\spo$ and $\overline{\spo}$ up to isomorphism. Hence the structure of finite dimension $\O$-modules follows by  Wedderburn's Theorem for central simple algebras \cite{rotman2017advancedalg}.   
  The general case relies on  some elementary properties of $\huaa{M}$ and $\hua{A}{-}{M}$, along with   an important fact that every element of $\O$-module generates a finite dimensional submodule.
  By the way, we found that the  octonions $\O$ can be endowed with  both  $\C$-module structure and $C\ell_6$-structure, using it we get an irreducible complex representation of $Spin(7)$. 

 
 Our second topic is about the cyclic elements. In contrast to the setting of complex numbers and quaternions,  cyclic elements play  key roles in the study of octonion sub-modules.
  An element $m$ in a given octonion module $M$ is called 
 \textbf{cyclic elements} if the submodule generated by it is  exactly $\O m$.
The collection of these elements is denoted by $\huac{M}$. 
It turns out that the cyclic elements are determined by the associative elements $\huaa{M}$ and the conjugate associative elements $\hua{A}{-}{M}$ completely.

 \begin{thm}\label{thm:cyclic-elements}For any left $\O$-module $M$, we have 
 $$\huac{M}=\left(\bigcup_{p\in \spo}p\cdot \huaa{M}\right)\bigcup\left(\bigcup_{p\in \spo}p\cdot \hua{A}{-}{M}\right). $$
 \end{thm}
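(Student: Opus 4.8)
The plan is to prove the two inclusions of the asserted identity separately, leaning on the structure theory behind Theorem~\ref{thm:strc thm}.

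For ``$\supseteq$'', I would fix $p\in\spo$ and $a\in\huaa M\cup\hua{A}{-}{M}$ and show that $p\cdot a$ is cyclic. The case $p=0$ is trivial, since then $p\cdot a=0$ and $\generat{0}=\{0\}=\spo\cdot 0$. For $p\neq 0$, $p$ is invertible in $\spo$, and the defining identity of an associative (resp.\ conjugate associative) element expresses every $q'\cdot(q\cdot a)$ as an $\spo$-multiple of $a$; this shows that $\spo\cdot a=\{q\cdot a\mid q\in\spo\}$, being an $\spr$-subspace stable under the $\spo$-action, is a submodule. Using in addition that the subalgebra of $\spo$ generated by any two elements is associative (Artin's theorem), one gets $r\cdot a=(rp^{-1})\cdot(p\cdot a)$ for every $r\in\spo$ (and similarly in the conjugate case), so $\spo\cdot(p\cdot a)=\spo\cdot a$. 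Hence $\spo\cdot(p\cdot a)$ is a submodule containing $p\cdot a$, and since any submodule containing $p\cdot a$ must contain $\spo\cdot(p\cdot a)$, it is exactly the submodule generated by $p\cdot a$; thus $p\cdot a$ is cyclic.

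For ``$\subseteq$'', let $m\in\huac M$, so $\generat m=\spo\cdot m$. The key reduction is a dimension count: the map $\spo\to M$, $q\mapsto q\cdot m$, is $\spr$-linear, so $\dim_\spr\generat m=\dim_\spr\spo\cdot m\le 8$; but $\generat m$ is a finite-dimensional $\spo$-module (every element of an $\spo$-module generates a finite-dimensional submodule), so Theorem~\ref{thm:strc thm} forces $\generat m\cong\spo^{n_1}\oplus\overline{\spo}^{\,n_2}$ with $8(n_1+n_2)\le 8$, i.e.\ $n_1+n_2\le1$. Thus $\generat m$ is $0$, or $\cong\spo$, or $\cong\overline{\spo}$. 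If $\generat m=0$ then $m=0\in\spo\cdot\huaa M$. If $\generat m\cong\spo$, I would transport the situation along an isomorphism $\varphi:\generat m\to\spo$: by Example~\ref{eg:simple O} the associative elements of the canonical module $\spo$ are exactly the reals and $\spo\cdot\spr=\spo$, so $\varphi(m)=p\cdot 1$ for some $p\in\spo$, giving $m=p\cdot a$ with $a=\varphi^{-1}(1)$; since an isomorphism carries associative elements to associative elements and an associative element of a submodule is associative in $M$, we get $a\in\huaa M$, hence $m\in\spo\cdot\huaa M$. The case $\generat m\cong\overline{\spo}$ is identical with ``associative'' replaced by ``conjugate associative'' (Example~\ref{eg:simple O} again describing the conjugate associative elements of $\overline{\spo}$), yielding $m\in\spo\cdot\hua{A}{-}{M}$.

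The main obstacle I anticipate is exactly this dimension reduction in ``$\subseteq$'': one has to know that $\generat m$ is finite-dimensional and then feed it into the classification of Theorem~\ref{thm:strc thm} to collapse it to $0$ or to one of the two simple modules $\spo,\overline{\spo}$. Once that is in hand the rest is bookkeeping, resting on the explicit description of the (conjugate) associative elements of $\spo$ and $\overline{\spo}$ and on the fact that these notions pass to, and are detected inside, submodules. The inclusion ``$\supseteq$'' is easier but still relies on the alternativity of $\spo$ (Artin's theorem) to move the invertible scalar $p$ in and out of $\spo\cdot a$.
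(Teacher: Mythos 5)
Your proposal is correct and follows essentially the same route as the paper: the inclusion $\subseteq$ via the dimension count $\dim_\spr\generat{m}_\spo\le 8$ forcing $\generat{m}_\spo\cong\spo$ or $\overline{\spo}$ and then pulling back the (conjugate) associative element $1$ along the isomorphism, and the inclusion $\supseteq$ via alternativity to show $\spo\cdot(p\cdot a)=\spo\cdot a$ is the submodule generated by $p\cdot a$. The only cosmetic difference is that for $\supseteq$ the paper instead exhibits explicit isomorphisms $\generat{a}_\spo\cong\spo$ (resp.\ $\overline{\spo}$), which yields the finer statement separating $\hua{C}{+}{M}$ and $\hua{C}{-}{M}$, but the underlying computations are the same.
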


 In view of Theorems \ref{thm:strc thm} and Theorem \ref{thm:cyclic-elements}, we find
 $$M=\text{Span}_\R \huac{M}.$$
This means that the module $M$ is determined completely by its cyclic elements in the form of real linear combination.  
For any element $m$ in a  left $\O$-module $M$,  there exist $m^\pm\in \text{Span}_\R\hua{C}{\pm}{M}$ such that $m=m^++m^-$.  We can decomposite $m^{\pm}$ into  a combination of real linearly independent cyclic elements. Denote by $l_m^{\pm}$ the minimal length of the decompositions of $m^{\pm}$. \textbf{We conjecture that,} $$\generat{m}_\O\cong \O^{l_m^+}\oplus\O^{l_m^-}.$$
If the conjecture is right, then the structure of the submodule generated by one element is completely clear.

\section{Preliminaries}

\subsection{The algebra of the octonions   $\spo$}
The  algebra of the octonions   $\spo$  is  a non-associative, non-commutative, normed division algebra over the $\spr$. Let     $e_1,\ldots,e_7$ be its natural  basis throughout this paper, i.e., $$e_ie_j+e_je_i=-2\delta_{ij},\quad i,j=1,\ldots,7.$$ For convenience, we  denote  $ e_0=1$.

In terms of the natural basis, an element in octonions can be written as $$x=x_0+\sum_{i=1}^7x_ie_i,\quad x_i\in\spr.$$
The conjugate octonion of $x$ is defined by  $\overline{x}=x_0-\sum_{i=1}^7x_ie_i$, and the norm of $x$ equals $|x|=\sqrt{x\overline{x}}\in \spr$, the real part of $x$ is $\re{x}=x_0=\frac{1}{2}(x+\overline{x})$.

 The full multiplication table is conveniently encoded in  the Fano mnemonic graph (see \cite{baez2002octonions,wang2014octonion}).
In the Fano mnemonic graph, the vertices are labeled by  $1, \ldots, 7$
instead of $e_1,\ldots, e_7$. Each of the 7 oriented lines gives a quaternionic triple. The
product of any two imaginary units is given by the third unit on the unique line
connecting them, with the sign determined by the relative orientation.

\

\noindent \small{\textbf{Fig.1} Fano mnemonic graph}
\begin{flushright}
\centerline{\includegraphics[width=4cm]{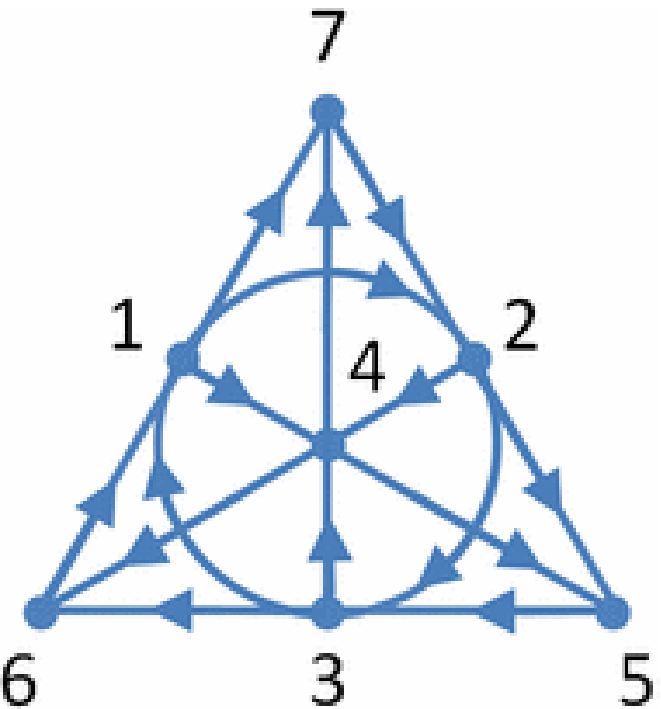}}
\end{flushright}
\normalsize

The associator of three octonions is defined as $$[x,y,z]=(xy)z-x(yz)$$
for any   $x,y,z\in\spo$, which is alternative in its arguments and has no real part. That is, $\spo$ is an alternative algebra and hence it satisfies the so-called R. Monfang identities \cite{schafer2017introduction}:
$$(xyx)z=x(y(xz)),\ z(xyx)=((zx)y)x,\ x(yz)x=(xy)(zx).$$ The commutator is defined as $$[x,y]=xy-yx.$$

\subsection{Universal Clifford algebra}
We shall use  the Clifford algebra $\C$ to study left $\O$-modules. In this subsection, we review some basic facts for universal Clifford algebras. The  Clifford algebras are introduced  by Clifford  in 1882.  For its recent development, we refer to \cite{Atiyah1964Cliffordmodules,gilbert1991clifford,Sommen2012spinor}.


\begin{mydef}
	Let $\mathbb{A}$ be an associative algebra over $\mathbb{R}$ with unit $1$ and let  $v:\R^n\rightarrow \A $ be an $\R$-linear embedding. The pair $(\A,v)$ is said to be a Clifford algebra over $\R^n$, if
	\begin{enumerate}
		\item $A$ is generated as an algebra by $\{v(x)\mid x\in \R^n\}$ and $\{\lambda1\mid \lambda\in\R\}$;
		\item $(v(x))^2=-\abs{x}^2, \forall\ x\in \R^n$.
	\end{enumerate}
\end{mydef}
We need some notations and conventions.
\begin{itemize}
	\item $x=(x_1,\ldots,x_n)\in \R^n, \abs{x}^2=\sum_{i=1}^{n}x_i^2$.
	\item\label{g_i} Let $\{f_i\}_{i=1}^n$ be the canonical orthonormal  basis of $\R^n$, $g_i=v(f_i)\in \A$, and
	$ f_i=(0,\ldots,0,1,0,\ldots,0)$   {with $1$ in the $i$-th slot.}
	\item Let $\mathcal{P}(n)$ be the collection  of all the  subsets of $\{1,\ldots,n\}$.
	\item For any $\alpha\in \P$, if $\alpha\neq \emptyset$, we write $\alpha=\{\alpha_1,\ldots,\alpha_k\}$ with $1\le \alpha_1<\cdots<\alpha_k\le n$ and we set  $g_{\alpha}=g_{\alpha_1}\cdots g_{\alpha_k}$. Otherwise, we denote $g_{\emptyset}=1$.
\end{itemize}
The Clifford algebra $\mathbb A$ can be described alternatively with the above  notations as
\begin{enumerate}
	\item $\mathbb A$ is $\R$-linearly generated  by $\{g_\alpha\mid \ \alpha\in \P\}$;
	\item $g_ig_j+g_jg_i=-2\delta_{ij}$  for any $i,j=1,\ldots,n$.
\end{enumerate}

It is well-known that  $\dim_{\R}\A\le2^n$. The Clifford algebra $(\A,v)$ over $\R^n$ may be not unique (up to isomorphism of algebras); see for example \cite{gilbert1991clifford}.
But the universal Clifford algebra $C\ell_n$ over $\R^n$   is unique up to isomorphism.

\begin{mydef}\label{univerasl}
	A Clifford algebra $(\A,v)$ is said to be a universal Clifford algebra, if for each Clifford algebra $(\mathbb{B},\mu)$ over $\R^n$, there exists an algebra homomorphism $\beta:\A\rightarrow \mathbb{B}$, such that $\mu=\beta\circ v$ and $\beta(1_\A)=1_{\mathbb{B}}$. Namely, the following diagram commutes.
	
	\begin{center}
		\usetikzlibrary{matrix,arrows}
		\begin{tikzpicture}[description/.style={fill=white,inner sep=2pt}]
		\matrix (m) [matrix of math nodes, row sep=3em,
		column sep=2.5em, text height=1.5ex, text depth=0.25ex]
		{ \mathbb{A} & & \mathbb{B} \\
			& \mathbb{R}^n & \\ };
		\path[->,font=\scriptsize]
		(m-1-1) edge node[auto] {$ \beta $} (m-1-3)
		(m-2-2) edge node[auto] {$ v $} (m-1-1)
		edge node[auto] {$ \mu $} (m-1-3);
		\end{tikzpicture}
		
	\end{center}
\end{mydef}
We recall  some equivalent descriptions of universal Clifford algebra $C\ell_n$.
\begin{thm}[\cite{gilbert1991clifford}]
	$(\A,v)$  is a Clifford algebra over $\R^n$, the following are equivalent:
	\begin{enumerate}
		\item $(\A,v)$ is a universal Clifford algebra $C\ell_n$ over $\R^n$;
		\item $dim_{\R}\A=2^n$;
		\item $g_1\cdots g_n\notin \R$.
	\end{enumerate}
\end{thm}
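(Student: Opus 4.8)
\noindent\emph{Proof sketch.} The plan is to fix a concrete model of the universal Clifford algebra and to analyze the kernel of the canonical comparison map. It is standard that a universal Clifford algebra $\clifd{n}$ over $\R^n$ exists, with structural embedding $v_0$; writing $g^0_i=v_0(f_i)$, it satisfies $\dim_\R\clifd{n}=2^n$ and has $\R$-basis $\{g^0_\alpha\mid\alpha\in\P\}$. Indeed, one realizes $\clifd{n}$ as the associative $\R$-algebra $T(\R^n)\big/\big\langle x\otimes x+\abs{x}^2:x\in\R^n\big\rangle$, which is universal in the sense of Definition~\ref{univerasl} by the universal property of the tensor algebra; the relations $(g^0_i)^2=-1$ and $g^0_ig^0_j=-g^0_jg^0_i$ for $i\neq j$ let one reduce every product of the $g^0_i$ to some $\pm g^0_\alpha$, giving $\dim_\R\clifd{n}\le 2^n$, while the Clifford action $x\mapsto x\wedge(\cdot)-\iota_x(\cdot)$ on the exterior algebra $\bigwedge\R^n$ shows the $g^0_\alpha$ to be linearly independent (the operator representing $g^0_\alpha$ carries $1\in\bigwedge\R^n$ to a standard basis vector, distinct $\alpha$ giving distinct vectors). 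Assume $n\ge 1$. For any Clifford algebra $(\A,v)$, the universal property of $\clifd{n}$ supplies an algebra homomorphism $\pi\colon\clifd{n}\to\A$ with $\pi\circ v_0=v$ and $\pi(1)=1$, hence $\pi(g^0_\alpha)=g_\alpha$ for every $\alpha\in\P$; since $v(\R^n)$ and $\R\cdot 1$ generate $\A$, the map $\pi$ is surjective, so $\dim_\R\A\le 2^n$ in all cases, and $\pi$ is an algebra isomorphism if and only if $\{g_\alpha\}_{\alpha\in\P}$ is $\R$-linearly independent in $\A$, equivalently $\dim_\R\A=2^n$.

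Three of the four implications I would use are now formal. For \infer{1}{2}: if $(\A,v)$ is universal, applying its universal property to the Clifford algebra $(\clifd{n},v_0)$ yields a surjection $\A\twoheadrightarrow\clifd{n}$, whence $\dim_\R\A\ge 2^n$ and therefore $\dim_\R\A=2^n$. For \infer{2}{1}: if $\dim_\R\A=2^n$, then the surjection $\pi$ is bijective, hence an algebra isomorphism with $\pi\circ v_0=v$, and for any Clifford algebra $(\mathbb{B},\mu)$ the homomorphism $\beta_0\colon\clifd{n}\to\mathbb{B}$ furnished by the universal property of $\clifd{n}$ gives $\beta:=\beta_0\circ\pi^{-1}\colon\A\to\mathbb{B}$ with $\beta\circ v=\beta_0\circ v_0=\mu$ and $\beta(1)=1$, so $(\A,v)$ is universal. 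For \infer{2}{3}: if $\dim_\R\A=2^n$ then $\{g_\alpha\}$ is a basis, so $g_1\cdots g_n=g_{\{1,\ldots,n\}}$ and $1=g_\emptyset$ are distinct basis vectors (here $n\ge1$ is used), whence $g_1\cdots g_n\notin\R\cdot 1$.

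The substance of the theorem is \infer{3}{2}. Put $I=\ker\pi$, a two-sided ideal of $\clifd{n}$; since $\pi(1)=1\neq 0$ we have $I\neq\clifd{n}$. Here I would invoke the structure theory of universal Clifford algebras: $\clifd{n}$ is a finite-dimensional semisimple $\R$-algebra having a single simple two-sided summand when $n\not\equiv 3\pmod 4$ and exactly two of them, say $\clifd{n}=S_{+}\oplus S_{-}$, when $n\equiv 3\pmod 4$; in the latter case the volume element $\omega:=g^0_1\cdots g^0_n$ is central, satisfies $\omega^2=1$, and $S_{\pm}$ are precisely its $(\pm 1)$-eigenspaces, so that $\omega=\mathbf{1}_{S_{+}}-\mathbf{1}_{S_{-}}$. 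If $\clifd{n}$ is simple then $I=0$ at once. Otherwise $I\in\{0,S_{+},S_{-}\}$, and if $I=S_{+}$ then $\pi(\mathbf{1}_{S_{+}})=0$ while $\pi(\mathbf{1}_{S_{-}})=\pi(1)=1$, so $g_1\cdots g_n=\pi(\omega)=-1\in\R$, contradicting a\asertion{3}; symmetrically $I=S_{-}$ gives $g_1\cdots g_n=1\in\R$, again impossible. Hence $I=0$, the map $\pi$ is an isomorphism, $\dim_\R\A=2^n$, and the three assertions are equivalent.

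The only non-formal ingredient is therefore the classification input used for \infer{3}{2} (that $\clifd{n}$ has at most two simple two-sided ideals and, when it has two, the volume element takes opposite scalar values on them), and I expect this to be the main obstacle to a fully self-contained argument. One can in fact bypass it: conjugation $y\mapsto g_iyg_i^{-1}$ by the invertible elements $g_i\in\A$ defines commuting involutive algebra automorphisms $\psi_i$ of $\A$ with $\psi_i(g_\alpha)=\pm g_\alpha$, hence a $(\mathbb{Z}_2)^n$-action whose joint eigenspaces are $\R$-spanned by the $g_\alpha$ and in which $g_\alpha$ and $g_{\alpha^c}$ (complementary index sets) share an eigenspace exactly when $n$ is odd; projecting a hypothetical nontrivial $\R$-linear relation $\sum_{\alpha\in\P}c_\alpha g_\alpha=0$ onto these eigenspaces and using that every $g_\alpha$ is invertible then forces $g_1\cdots g_n\in\R$ unless all $c_\alpha$ vanish, again contradicting a\asertion{3}.
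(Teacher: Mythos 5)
The paper quotes this result from \cite{gilbert1991clifford} and gives no proof of its own, so there is no internal argument to compare yours against; what you have written is a genuine supplement. On its own terms your argument is correct. The fixed universal model $\clifd{n}$ with $\dim_{\R}\clifd{n}=2^n$ (established via the Clifford action on $\bigwedge\R^n$) together with the canonical surjection $\pi\colon\clifd{n}\to\A$ deliver \infer{1}{2}, \infer{2}{1} and \infer{2}{3} as formal consequences, and the standing hypothesis $n\ge 1$ is necessary, since for $n=0$ assertion $(\mathrm{iii})$ fails trivially while $(\mathrm{i})$ and $(\mathrm{ii})$ always hold. For the substantive implication \infer{3}{2}, identifying $\ker\pi$ with one of the two simple two-sided ideals of $\clifd{n}$ when $n\equiv 3\pmod 4$ and observing that $\pi$ then sends the central volume element $\omega=g^0_1\cdots g^0_n$ to $\pm 1$ is a standard route and is carried out correctly. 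Your closing alternative via the $(\mathbb{Z}_2)^n$-action $\psi_i\colon y\mapsto g_iyg_i^{-1}$ is the more self-contained version: it trades the semisimplicity and Bott-periodicity input for a character-theoretic projection carried out inside $\A$ itself, isolating the odd $n$ case, where $g_\alpha$ and $g_{\alpha^c}$ share a joint eigenspace, as the only possible source of a nontrivial relation among the $g_\alpha$, and such a relation forces $g_1\cdots g_n\in\R$. Two small polish items: write the central idempotents out as $(1\pm\omega)/2$ rather than the undefined symbols $\mathbf{1}_{S_{\pm}}$, and state explicitly that each $g_\alpha$ is invertible (hence nonzero) because $g_\alpha^2=\pm 1$, a fact both variants use silently.
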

At the last of this subsection, we give   the following algebra isomorphism of $C\ell_n$ (see for example \cite{Atiyah1964Cliffordmodules}).
\begin{itemize}
	\item $C\ell_{n+8}\cong C\ell_n\otimes M(16,\R)\cong M(16,C\ell_n)$;
	\item for $n=0,\ldots,7$ we have table
	\begin{center}
		\begin{tabular}[ht]{c c}
			\hline
			n & $C\ell_n$ \\
			\hline
			0 & $\R$ \\
			1 & $\mathbb{C}$ \\
			2 & $\mathbb{H}$ \\
			3 & $\mathbb{H}\bigoplus\mathbb{H}$ \\
			4 & $M(2,\mathbb{H})$ \\
			5 & $M(4,\mathbb{C})$ \\
			6 & $M(8,\R)$ \\
			7 & $M(8,\R)\bigoplus M(8,\R)$ \\
			\hline
		\end{tabular}
	\end{center}
\end{itemize}
Here, we denote by $M(k, \mathbb F)$ the collection of all $k\times k$ matrices  with each entry in the algebra $\mathbb F$.

\section{$\spo$-modules}
We set up in this section some preliminary definitions and results on left $\O$-modules. 


\begin{mydef}
	 A real vector space $M$ is called a (left) \textbf{$\O$-module},  equipped with a scalar multiplication $\O\times M\rightarrow M$, denoted by $$(q,m)\mapsto qm,$$
	 such that the following axioms hold for all $q,q_1,q_2\in \O,\; \lambda\in \R$ and all $ m,m_1,m_2\in M$:
	\begin{enumerate}
		\item $(\lambda q)m=\lambda(qm)=q(\lambda m)$;
		\item $(q_1+q_2)m=q_1m+q_2m, \ q(m_1+m_2)=qm_1+qm_2$;
		\item $[q_1,q_2,m]=-[q_2,q_1,m];$
		\item $1m=m.$
	\end{enumerate}
	Here, the \textbf{left associator} is defined by $$[q_1,q_2,m]:=(q_1q_2)m-q_1(q_2m).$$
\end{mydef}

Note that this definition  is  equivalent to the definition given in \cite{ludkovsky2007algebras,ludkovsky2007Spectral}, wherein the axiom  $(\mathrm{\romannumeral3)}$ is replaced by  $$q^2m=q(qm), \text{ for all } q\in \O, m\in M.$$
The proof is trivial by polarizing the above relation.
It also agrees with the one given in \cite{goldstine1964hilbert} wherein $M$  needs to satisfy an additional axiom: $p(p^{-1}x)=x$, which can be deduced from the equation $p(px)=p^2x$ directly.

Let $M$ be a left $\O$-module.
The definition of the terms submodule, homomorphism, isomorphism, kernel of a homomorphism,  which are familiar from the study of associative modules, do not invole associativity of multiplication and are thus immediately applicable to the case in general.  Let  $\Hom_{\O}(M,M')$ denote the set of all $\O$-homomorphisms from $M$ to $M'$ as usual ($M'$ being arbitrary $\O$-module).
So is the notation $JN$ for the subset of $M$
spaned by all products $rn$ with $r\in J$ and $n\in N$
($J$ being arbitrary nonempty subset of $\O$ and $N$ being arbitrary nonempty subset of $M$), here we must of course distinguish between $J_1(J_2N)$ and $(J_1J_2)N$. Let $\left\langle N\right\rangle _\O$ denote the minimal submodule which contains $N$ as before. 
An element $m\in M$ is said to be \textbf{associative} if $$[p,q,m]=0, \quad \forall p,q\in \O.$$ Denote by $\huaa{M}$ the set of all associative elements in $M$:
$$\huaa{M}:=\{m\in M\mid [p,q,m]=0,\ \forall p,q \in \O\}.$$
One useful identity  which holds in any $\O$-module $M$ is
\begin{equation}\label{eq:[p,q,r]m+p[q,r,m]=[pq,r,m]-[p,qr,m]+[p,q,rm]}
[p,q,r]m+p[q,r,m]=[pq,r,m]-[p,qr,m]+[p,q,rm],
\end{equation}
where $p,q,r\in \O,\ m\in M$. The proof is  by striaghtforward calculations. It follows that:
\begin{lemma}\label{lem: [p,q,rm]=[p,q,r]m}
	For all  associative element $m\in \huaa{M}$, we have $$[p,q,rm]=[p,q,r]m,  \text{ for all } p,q,r \in \O .$$
%
	
\end{lemma}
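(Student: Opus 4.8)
The plan is to read off the claim directly from the universal identity \eqref{eq:[p,q,r]m+p[q,r,m]=[pq,r,m]-[p,qr,m]+[p,q,rm]}, which holds in every left $\O$-module and hence in $M$. Fix $p,q,r\in\O$ and $m\in\huaa{M}$. The only thing to exploit is the defining property of an associative element: every associator with $m$ in the last slot vanishes, i.e. $[a,b,m]=0$ for all $a,b\in\O$.

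First I would specialize \eqref{eq:[p,q,r]m+p[q,r,m]=[pq,r,m]-[p,qr,m]+[p,q,rm]} to this $m$. On the left-hand side the term $p[q,r,m]$ equals $p\cdot 0=0$. On the right-hand side the terms $[pq,r,m]$ and $[p,qr,m]$ are each $0$, since $pq$ and $qr$ are again elements of $\O$ occupying the first two slots. What survives is
\begin{equation*}
[p,q,r]m=[p,q,rm],
\end{equation*}
which is exactly the assertion.

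There is essentially no obstacle here: the content is entirely packaged in identity \eqref{eq:[p,q,r]m+p[q,r,m]=[pq,r,m]-[p,qr,m]+[p,q,rm]}, whose validity in an arbitrary $\O$-module was already noted (it follows by expanding both sides using the definition of the left associator and cancelling, and does not use alternativity). Thus the proof is a one-line substitution, and the lemma should be recorded as an immediate corollary of that identity together with the definition of $\huaa{M}$.
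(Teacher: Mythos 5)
Your proof is correct and is exactly the argument the paper intends: the paper states identity \eqref{eq:[p,q,r]m+p[q,r,m]=[pq,r,m]-[p,qr,m]+[p,q,rm]} and then presents the lemma as an immediate consequence, which is precisely your substitution using $[a,b,m]=0$ for $m\in\huaa{M}$. No gaps.
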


The following elementary property  will be useful in the sequel. The proof is trivial and will be omitted here.
\begin{prop}\label{prop:f(huaaM) in huaaN}
	If $f\in \Hom_{R}(M,N)$, then $f([p,q,x])=[p,q,f(x)]$ for all $p,q\in \O,\ x\in M$. Therefore  $f(\huaa{M})\subseteq \huaa{N}$.
\end{prop}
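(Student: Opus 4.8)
The plan is to unwind the definition of an $\O$-module homomorphism and apply it termwise to the left associator. Recall that $f\in\Hom_{\O}(M,N)$ means precisely that $f$ is additive and satisfies $f(qm)=qf(m)$ for all $q\in\O$ and $m\in M$; associativity of the scalar action is never invoked. So the first step is simply to expand $[p,q,x]=(pq)x-p(qx)$, apply $f$, and use additivity to get $f([p,q,x])=f((pq)x)-f(p(qx))$. Then applying the scalar-linearity of $f$, once to the outer factor and once more to the inner product, yields $f((pq)x)=(pq)f(x)$ and $f(p(qx))=pf(qx)=p(qf(x))$. Subtracting gives $f([p,q,x])=(pq)f(x)-p(qf(x))=[p,q,f(x)]$, which is the claimed identity.

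For the second assertion, let $m\in\huaa{M}$, so that $[p,q,m]=0$ for all $p,q\in\O$. Applying the identity just established, $[p,q,f(m)]=f([p,q,m])=f(0)=0$ for all $p,q\in\O$, whence $f(m)\in\huaa{N}$. Since $m$ was arbitrary in $\huaa{M}$, this gives $f(\huaa{M})\subseteq\huaa{N}$.

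I do not expect any genuine obstacle here: the statement is an immediate consequence of $\O$-linearity. The only point that warrants a moment's care is that, because the octonions are non-associative, one must resist the temptation to reassociate $(pq)x$ or $p(qx)$; but $f$ is required only to intertwine the scalar multiplication, not to respect any reassociation, so each term is handled separately and the computation goes through verbatim. For this reason the proof is short enough that it is fair to state it as "trivial" and omit the details, exactly as announced before the proposition.
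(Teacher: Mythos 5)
Your proof is correct and is exactly the routine computation the paper has in mind when it declares the proof ``trivial'' and omits it: expand the left associator, use additivity, and apply $f(qm)=qf(m)$ to each term separately without reassociating. You also rightly read the hypothesis as $f\in\Hom_{\O}(M,N)$ (the subscript $R$ in the statement is evidently a typo, since mere $\R$-linearity would not give $f((pq)x)=(pq)f(x)$), and your remark that non-associativity is harmless because $f$ only intertwines the scalar action is precisely the point.
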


\

We give several elemetary left $\O$-module examples.
\begin{eg}
	It is easy to see the real vector spaces  $\spo,\; \spo^n,\; M(n,\spo)$ with the obvious multiplication are all left  $\O$-module. Clearly,  the sets of associative elements on these modules    are  $\spr,\;\spr^n,\;M(n,\spr)$ respectively.
\end{eg}

We can define  a different $\spo$-module structure on the octonions $\spo$ itself.
\begin{eg}[	$\overline{\spo}$]\label{eg:simple O}
	 Define: $$p\hat{\cdot}x:=\overline{p}x,\ \forall p\in \spo,x\in \spo.$$
	It's easy to check this is  a left $\spo$-module. Indeed $$p^2\hat{\cdot}x=\overline{p}^2x=p\hat{\cdot}(p\hat{\cdot}x).$$
	 We shall denote this $\spo$-module by 	$\overline{\spo}$. By direct calculations, we obtain: $$[p,q,x]_{\overline{\O}}=[p,q,x]+\overline{[p,q]}x.$$ This implies that $\huaa{\overline{\spo}}=\{0\}$. Note that Proposition \ref{prop:f(huaaM) in huaaN} ensures that $\huaa{M}\cong_\spr \huaa{N}$ when  $M\cong_\O N$, and therefore $\spo\ncong\overline{\spo}$.
\end{eg}
However, there is a special subset in $\overline{\spo}$, that is, the real subspace $\spr$. To describe such elements, we introduce a new notion of {conjugate associative element}.
\begin{mydef}
	An element $m\in M$ is said to be \textbf{conjugate associative} if
	$$(pq)m=q(pm),\quad \forall p,q\in \spo.$$
	Denote by $\hua{A}{-}{M}$ the set of all conjugate associative elements.
\end{mydef}

\begin{lemma}\label{lem:hua-(conjgt O)=R}
$\hua{A}{-}{\overline{\spo}}=\spr.$
\end{lemma}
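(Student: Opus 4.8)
The plan is to translate the defining equation of a conjugate associative element of $\overline{\spo}$ into a statement about ordinary octonion multiplication, and then to recognise it as a condition already understood for the canonical module $\spo$.

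First I would unwind the module action. On $\overline{\spo}$ the scalar multiplication is $p\hat{\cdot}x=\overline{p}x$, so for $m\in\overline{\spo}$ and $p,q\in\spo$ we have $(pq)\hat{\cdot}m=\overline{pq}\,m=(\overline{q}\,\overline{p})m$ on the one hand, and $q\hat{\cdot}(p\hat{\cdot}m)=q\hat{\cdot}(\overline{p}m)=\overline{q}(\overline{p}m)$ on the other, all products now being the usual product of $\spo$. Hence $m\in\hua{A}{-}{\overline{\spo}}$ if and only if $(\overline{q}\,\overline{p})m=\overline{q}(\overline{p}m)$ for all $p,q\in\spo$. Since conjugation is an $\R$-linear bijection of $\spo$, substituting $a=\overline{q}$, $b=\overline{p}$ shows this is equivalent to $(ab)m=a(bm)$ for all $a,b\in\spo$, i.e.\ to $[a,b,m]=0$ for all $a,b\in\spo$, where $[\cdot,\cdot,\cdot]$ denotes the associator of the octonions.

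Finally I would note that the condition just obtained is precisely $m\in\huaa{\spo}$ for the canonical left $\O$-module $\spo$, whose associative elements have already been identified as $\spr$; therefore $\hua{A}{-}{\overline{\spo}}=\huaa{\spo}=\spr$. If one prefers a self-contained argument for $\huaa{\spo}=\spr$, the inclusion $\spr\subseteq\huaa{\spo}$ is immediate from the centrality of $\spr$ in $\spo$, while for the reverse inclusion one writes $m=m_0+\sum_{k=1}^{7}m_ke_k$ and evaluates $[e_i,e_j,m]$ for suitable $i\neq j$ along the Fano lines: using that the associator is alternating and purely imaginary, these vanish only if $m_k=0$ for every $k\geq1$. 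I do not expect any genuine obstacle here — the statement is essentially a reformulation — the one point deserving a word of care being the legitimacy of the substitution $a=\overline{q}$, $b=\overline{p}$, which is fine because conjugation is an involutive bijection of $\spo$.
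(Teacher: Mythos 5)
Your proof is correct and follows essentially the same route as the paper: both arguments reduce the condition $(pq)\hat{\cdot}m=q\hat{\cdot}(p\hat{\cdot}m)$ to the vanishing of the ordinary octonion associator $[a,b,m]$ for all $a,b\in\spo$, and then conclude $m\in\spr$ because the nucleus of $\spo$ is $\spr$. Your use of the anti-automorphism $\overline{pq}=\overline{q}\,\overline{p}$ together with the substitution $a=\overline{q}$, $b=\overline{p}$ is a slightly more direct way of reaching that reduction than the paper's computation with $[p,q,x]_{\overline{\O}}$, but it is not a genuinely different argument.
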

\begin{proof}
	Suppose $x\in \hua{A}{-}{\overline{\spo}}$, then for any $p,q\in \spo$,
	$$0=(pq){\hat{\cdot}}x-q{\hat{\cdot}}(p{\hat{\cdot}}x)=(pq){\hat{\cdot}}x-(qp){\hat{\cdot}}x+[q,p,x]_{\overline{\O}}=[p,q]{\hat{\cdot}}x-[p,q,x]_{\overline{\O}}$$
	Hence we obtain
	$$\overline{[p,q]}x-[p,q,x]-\overline{[p,q]}x=[q,p,x]=0$$
	this implies that $x\in \spr$. Clearly $\spr \subseteq \hua{A}{-}{\overline{\spo}}$, thus $\hua{A}{-}{\overline{\spo}}=\spr.$
\end{proof}

\begin{lemma}\label{lem:huaa cap huaa-=0}
	For any left $\O$-module $M$, we have $\huaa{M}\cap \hua{A}{-}{M}=\{0\}$ .
\end{lemma}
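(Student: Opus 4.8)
The plan is to show that any $m$ lying in both sets must be zero; the reverse inclusion $\{0\}\subseteq\huaa{M}\cap\hua{A}{-}{M}$ is immediate since $[p,q,0]=0$ and $(pq)0=0=q(p0)$. So suppose $m\in\huaa{M}\cap\hua{A}{-}{M}$. First I would combine the two defining relations: because $m$ is associative, $(pq)m=p(qm)$ for all $p,q\in\O$, and because $m$ is conjugate associative, $(pq)m=q(pm)$. Hence
$$p(qm)=q(pm)\qquad\text{for all }p,q\in\O.$$

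Next I would exploit the anticommutativity of distinct imaginary units. Fix $i\neq j$ in $\{1,\ldots,7\}$ and take $p=e_i$, $q=e_j$. Using associativity of $m$ together with $e_je_i=-e_ie_j$ in $\O$, we get $e_j(e_im)=(e_je_i)m=-(e_ie_j)m=-e_i(e_jm)$. On the other hand the displayed identity gives $e_j(e_im)=e_i(e_jm)$. Comparing the two expressions, $e_i(e_jm)=-e_i(e_jm)$, so $e_i(e_jm)=0$.

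Finally I would invert left multiplication by $e_i$. Axiom $(\mathrm{iii})$ — equivalently $q^2m=q(qm)$ — gives $e_i(e_ix)=e_i^2x=-x$ for every $x\in M$, so the operator $L_{e_i}\colon x\mapsto e_ix$ is a bijection of $M$ with $L_{e_i}^{-1}=-L_{e_i}$. Applying $L_{e_i}^{-1}$ to $e_i(e_jm)=0$ yields $e_jm=0$, and then $e_j(e_jm)=-m=0$, so $m=0$. This proves $\huaa{M}\cap\hua{A}{-}{M}=\{0\}$. I do not anticipate any real obstacle; the only point requiring attention is to justify the invertibility of $L_{e_i}$ from the axiom $e_i^2m=e_i(e_im)$ rather than from associativity of the action, which is not available in an $\O$-module.
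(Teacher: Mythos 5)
Your proof is correct and follows essentially the same route as the paper's: both combine the associativity relation $(pq)m=p(qm)$ with the conjugate-associativity relation $(pq)m=q(pm)$ to force $[p,q]m=0$ (you specialize to $p=e_i$, $q=e_j$, arriving at $e_i(e_jm)=0$), and then conclude $m=0$ from the non-commutativity of $\O$. The only difference is cosmetic: the paper keeps $p,q$ general and asserts the final step tersely ("choose $[p,q]\neq0$"), whereas you work with imaginary units and spell out the invertibility of $L_{e_i}$ via $e_i(e_ix)=-x$, which is a clean way to make that last implication airtight.
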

\begin{proof}
	Obviously $0\in \huaa{M}\cap \hua{A}{-}{M}$. Let $ x\in \huaa{M}\cap \hua{A}{-}{M}$, then for any $p,q\in \spo$,
	$$[p,q]x=(pq)x-(qp)x=(pq)x-p(qx)=[p,q,x]=0.$$
	This implies $x=0$ since we can choose $p,q\in\O$ such that $[p,q]\neq 0$. This proves the lemma.
\end{proof}

\begin{rem}
Clearly, both $\huaa{M}$ and $\hua{A}{-}{M}$ are real vector spaces. If $M$ is of finite dimension, we call the ordered pair $(\dim_{\R}\huaa{M},\dim_{\R}\hua{A}{-}{M})$ \textbf{\type} of $M$. 	 
	We shall use these notions to describe the structure of left $\O$-modules. It turns out that type is a complete invariant in the finite dimensional case.
\end{rem}
%
%

Let $M$ be a left $\spo$-module. We shall establish some  properties of  associative elements and conjugate associative elements which will be used in the seuel.

%


%
%

\begin{lemma}\label{lem:free mod unique represt}
	Let $ \{x_i\}_{i=1}^n$ be an  $\spr$-linearly independent set of \asselm s of  $M$. If $$\sum_{i=1}^nr_ix_i=0 ,\ r_i\in \spo \text{ for each }i=1,\dots, n,$$    then $r_i=0 $ for each $i=1,\dots, n.$
\end{lemma}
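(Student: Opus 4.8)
The plan is to convert the octonion-module relation into a statement about the algebra of $\R$-linear endomorphisms of $\O$, exploiting that each $x_i$ is associative, and then to invoke the fact that left multiplications already exhaust $\End_\R(\O)$. Throughout, write $L_p\colon q\mapsto pq$ for left multiplication by $p\in\O$, and let $\mathcal L\subseteq\End_\R(\O)$ be the unital subalgebra generated by all the $L_p$.

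The first step is a propagation principle: if $\sum_{i=1}^n r_ix_i=0$ with $r_i\in\O$ and each $x_i$ associative, then $\sum_{i=1}^n\phi(r_i)x_i=0$ for every $\phi\in\mathcal L$. Indeed, for any $p_1,\dots,p_k\in\O$, repeated use of the identity $(ab)x_i=a(bx_i)$ — valid because $x_i$ is associative — yields $\bigl(p_1(p_2(\cdots(p_kr_i)\cdots))\bigr)x_i=p_1\bigl(p_2(\cdots(p_k(r_ix_i))\cdots)\bigr)$, so summing over $i$ gives $\sum_i\bigl(L_{p_1}\cdots L_{p_k}r_i\bigr)x_i=p_1\bigl(\cdots p_k\bigl(\sum_ir_ix_i\bigr)\cdots\bigr)=0$; since $\mathcal L$ is the $\R$-span of such products $L_{p_1}\cdots L_{p_k}$, the claim extends to all $\phi\in\mathcal L$ by $\R$-linearity.

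Next I would establish $\mathcal L=\End_\R(\O)$. By the alternativity of $\O$ (its Moufang identities), the operators $L_{e_1},\dots,L_{e_7}$ satisfy $L_{e_i}L_{e_j}+L_{e_j}L_{e_i}=-2\delta_{ij}I$, so the assignment $g_i\mapsto L_{e_i}$ extends to an algebra homomorphism $\C\to\End_\R(\O)$ whose image is $\mathcal L$ (note $L_p=(\re p)I+\sum_ip_iL_{e_i}$, so $\mathcal L$ is already generated by $I$ and the $L_{e_i}$). Since $\C\cong M(8,\R)\oplus M(8,\R)$ as recalled in the preliminaries, the image $\mathcal L$ is one of the quotients $0$, $M(8,\R)$, or $M(8,\R)\oplus M(8,\R)$; it is nonzero (it contains $I$), and it cannot be the whole thing because $\dim_\R\bigl(M(8,\R)\oplus M(8,\R)\bigr)=128>64=\dim_\R\End_\R(\O)$. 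Hence $\mathcal L\cong M(8,\R)$ is a $64$-dimensional subalgebra of the $64$-dimensional algebra $\End_\R(\O)$, i.e. $\mathcal L=\End_\R(\O)$.

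Finally I would isolate a single coefficient. Assume for contradiction that the $r_i$ are not all zero, choose a maximal $\R$-linearly independent subset — say, after reindexing, $r_1,\dots,r_s$ with $s\ge1$ — and write $r_i=\sum_{k=1}^sc_{ik}r_k$ with $c_{ik}\in\R$ for $i>s$. Substituting into $\sum_i\phi(r_i)x_i=0$ and using $\R$-bilinearity of the scalar multiplication to collect terms, the relation becomes $\sum_{k=1}^s\phi(r_k)\,y_k=0$ with $y_k:=x_k+\sum_{i>s}c_{ik}x_i\in\huaa{M}$; here $y_1\neq0$ because $x_1,\dots,x_n$ are $\R$-linearly independent. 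Since $r_1,\dots,r_s$ are $\R$-linearly independent in $\O$ and $\mathcal L=\End_\R(\O)$, there is $\phi\in\mathcal L$ with $\phi(r_1)=1$ and $\phi(r_k)=0$ for $2\le k\le s$; for this $\phi$ the relation reads $y_1=0$, a contradiction, so all $r_i=0$. I expect the step $\mathcal L=\End_\R(\O)$ to be the crux: purely formal manipulations are circular — solving $\sum r_ix_i=0$ for one $x_i$ in terms of the others merely reproduces the original relation — so one genuinely needs the structural input that left octonion multiplications already generate all of $\End_\R(\O)$, which is precisely what the Clifford-algebra machinery supplies.
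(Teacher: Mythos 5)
Your proof is correct, but it takes a genuinely different route from the paper's. The paper argues by induction on $n$: assuming $r_{k+1}\neq 0$, it solves for $x_{k+1}=\sum_{i\le k}s_ix_i$, applies $[p,q,\cdot\,]$ and the identity $[p,q,rm]=[p,q,r]m$ (Lemma \ref{lem: [p,q,rm]=[p,q,r]m}) to get $\sum_i[p,q,s_i]x_i=0$, invokes the induction hypothesis to force $[p,q,s_i]=0$ for all $p,q$ — hence $s_i\in\R$ since the nucleus of $\O$ is $\R$ — and so contradicts the $\R$-linear independence of the $x_i$. You instead prove a propagation principle ($\sum_i\phi(r_i)x_i=0$ for every $\phi$ in the algebra $\mathcal L$ generated by left multiplications, which is valid and carefully justified), show $\mathcal L=\End_\R(\O)$ by the Clifford-theoretic dimension count, and then kill the coefficients one at a time with a separating $\phi$. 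Both arguments are sound. The paper's version is more elementary and self-contained within Section 3, needing only Lemma \ref{lem: [p,q,rm]=[p,q,r]m}; yours front-loads the structural input $\C\cong M(8,\R)\oplus M(8,\R)$ that the paper only deploys in Section 4 (there is no circularity, since the construction of the $\C$-action on $M$ and the identity $L_{e_1}\cdots L_{e_7}=-\mathrm{Id}$ on $\O$ do not rest on this lemma, but as written your proof could not sit at the lemma's current location without forward references). What your route buys is a stronger intermediate statement — a single relation $\sum r_ix_i=0$ with associative $x_i$ propagates to all of $\End_\R(\O)$ — which immediately yields Lemma \ref{lem: xishu asselm} as well. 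One small quibble: your closing remark that ``purely formal manipulations are circular'' undersells the elementary approach; solving for one $x_i$ and then hitting the result with the associator is precisely the paper's (non-circular) induction.
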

\begin{proof}
	The proof is by induction on $n$.  For the case $n=1$, we have  $rx=0$. If $r\neq 0$,  since $x\in \huaa{M},$ it follows that $$0=r^{-1}(rx)=(r^{-1}r)x=x,$$ a contrdiction with our assumption $x\neq 0$. Assume the lemma holds for degree $k$, we will prove it for $k+1$. Suppose $\sum_{i=1}^{k+1}r_ix_i=0$ and $r_{k+1}\neq 0$.
	Denote $s_i=-r_ir_{k+1}^{-1}$. Therefore
	$$x_{k+1}=\sum _{i=1}^k s_ix_i.$$
	Since $x_{k+1}\in \huaa{M}$, thus  for all $p,q,\in \O$,
	$$0=[p,q,x_{k+1}]=\sum_{i=1}^k[p,q,s_ix_i]=\sum _{i=1}^k[p,q,s_i]x_i$$
	where we  heve used Lemma \ref{lem: [p,q,rm]=[p,q,r]m} in  the last eqution. Hence by  induction hypothesis we conclude that,
	$$[p,q,s_i]=0, \text{ for each } i\in\{1,\dots, k\}\;  \text{ and for all } p,q\in \spo.$$ This implies $s_i\in \spr$. Note that $$\sum_{i=1}^ks_ix_i=x_{k+1},$$
	which  contradicts the hypothesis that $ \{x_i\}_{i=1}^n$ is  $\spr$-linearly independent, we thus	prove the lemma.
\end{proof}
Note that we have actually proved the following property.
\begin{cor}\label{{cor:O linear indpt= R linear indpt}}
	Let $S\subseteq \huaa{M}$. Then that $S$ is $\spo$-linearly independent if and only if it is $\spr$-linearly independent.
	
\end{cor}
\begin{lemma}\label{lem: xishu asselm}
	Under the assumptions of Lemma \ref{lem:free mod unique represt}, if $y=\sum_{i=1}^nr_ix_i\in \huaa{M}$, then we have  $r_i\in \spr  \text{ for each }i\in \{1,\dots, n\}.$
\end{lemma}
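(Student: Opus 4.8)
The plan is to run the very same computation that appeared in the proof of Lemma \ref{lem:free mod unique represt}, but now applied to the element $y$ itself rather than to one of the basis vectors. First I would observe that, since $y\in\huaa{M}$ and each $x_i\in\huaa{M}$, Lemma \ref{lem: [p,q,rm]=[p,q,r]m} applies termwise: for arbitrary $p,q\in\spo$,
$$0=[p,q,y]=\sum_{i=1}^n[p,q,r_ix_i]=\sum_{i=1}^n[p,q,r_i]\,x_i,$$
where the middle equality is just $\spr$-linearity (indeed $\spo$-linearity is not needed) of the left associator in its third argument. Here $[p,q,r_i]\in\spo$ for each $i$, and $\{x_i\}_{i=1}^n$ is by hypothesis an $\spr$-linearly independent family of associative elements, so Lemma \ref{lem:free mod unique represt} forces $[p,q,r_i]=0$ for every $i$.

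Since $p,q\in\spo$ were arbitrary, this says precisely that each $r_i$ lies in the nucleus of $\spo$. The final step is to invoke the standard fact that the nucleus of the octonions is $\spr$: if $[p,q,r]=0$ for all $p,q\in\spo$, then $r\in\spr$. One can see this directly from the multiplication table encoded in Fig.~1 — the associator is alternating and has no real part, and for a suitable pair of imaginary units $e_i,e_j$ lying on the relevant Fano line the associator $[e_i,e_j,r]$ detects the offending imaginary components of $r$, so vanishing of all associators against $r$ kills $\pureim{r}$. Hence $r_i\in\spr$ for each $i$, which is the assertion.

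I do not expect any genuine obstacle here: the entire content is already packaged in the two preceding lemmas together with the elementary description of the nucleus of $\spo$. The only points needing a word of care are (a) that Lemma \ref{lem: [p,q,rm]=[p,q,r]m} really does apply to each summand $r_ix_i$, which is exactly where the standing hypothesis $x_i\in\huaa{M}$ enters, and (b) that the family $\{x_i\}$ is still $\spr$-linearly independent and consists of associative elements, so that Lemma \ref{lem:free mod unique represt} may legitimately be applied with the octonionic coefficients $[p,q,r_i]$. Both are immediate from the hypotheses of Lemma \ref{lem:free mod unique represt}, so the proof is short.
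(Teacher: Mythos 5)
Your proof is correct and follows essentially the same route as the paper: expand $[p,q,y]$ termwise via Lemma \ref{lem: [p,q,rm]=[p,q,r]m}, apply Lemma \ref{lem:free mod unique represt} to conclude $[p,q,r_i]=0$ for all $p,q$, and use that the nucleus of $\spo$ is $\spr$. The only difference is that you spell out the nucleus argument, which the paper leaves implicit.
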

\begin{proof}
	 Since $y\in \huaa{M}$, we have for all $p,q\in \spo$,
	$$0=[p,q,\sum_{i=1}^nr_ix_i]=\sum_{i=1}^n[p,q,r_i]x_i.$$
	By Lemma \ref{lem:free mod unique represt}, we conclude $[p,q,r_i]=0,\ i=1,\dots, n$. This yields $r_i\in \spr$.
\end{proof}

We next consider the properties of conjugete elements. It turns out that similar statements hold for $S\subseteq \hua{A}{-}{M}$.

\begin{lemma}\label{lem:hua- xishu }
	Let $ S=\{x_i\}_{i=1}^n\subseteq \hua{A}{-}{M}$ be an  $\spr$-linearly independent set. If $$\sum_{i=1}^nr_ix_i=0 ,\ r_i\in \spo \text{ for each }i=1,\dots, n,$$    then $r_i=0 $ for each $ i=1,\dots, n.$ 
	
	Moreover, if $y=\sum_{i=1}^nr_ix_i\in \hua{A}{-}{M}$, then $r_i\in \spr$ for each $ i=1,\dots, n.$
\end{lemma}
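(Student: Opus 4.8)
The structure of the argument should mirror, almost verbatim, the proofs of Lemmas \ref{lem:free mod unique represt} and \ref{lem: xishu asselm}, with the associativity condition replaced everywhere by the conjugate-associativity condition. The essential point is that the analogue of Lemma \ref{lem: [p,q,rm]=[p,q,r]m} holds for conjugate associative elements: I would first record (or invoke directly, by the same straightforward computation using identity \eqref{eq:[p,q,r]m+p[q,r,m]=[pq,r,m]-[p,qr,m]+[p,q,rm]}) that if $m\in\hua{A}{-}{M}$ then the quantity $(pq)(rm)-q(p(rm))$ can be rewritten solely in terms of the action of $\O$ on $m$ through its scalar coefficient $r$ — concretely, something like $(pq)(rm)-q(p(rm))=\big((pq)r-q(pr)\big)m$, or whatever the correct bracket identity turns out to be after expanding. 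Call this operation $\langle p,q,r\rangle$; the key fact I need is that $m\in\hua{A}{-}{M}$ and $\langle p,q,r\rangle=0$ for all $p,q$ forces $r\in\spr$, exactly as $[p,q,r]=0$ for all $p,q$ forces $r\in\spr$.

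Granting that, the first assertion is proved by induction on $n$. For $n=1$: if $rx=0$ with $r\neq 0$, apply $r^{-1}$ and use that $x$ is conjugate associative (so $(r^{-1}r)x=r(r^{-1}x)$, hence $x=r^{-1}(rx)=0$ after noting $r(r^{-1}x)=(r^{-1}r)x$ — here one must be slightly careful about which bracketing the conjugate-associativity law gives, but with $p=r^{-1},q=r$ it delivers what is needed), contradicting $x\neq 0$. For the inductive step, suppose $\sum_{i=1}^{k+1}r_ix_i=0$ with $r_{k+1}\neq 0$; set $s_i=-r_ir_{k+1}^{-1}$ so that $x_{k+1}=\sum_{i=1}^k s_ix_i$. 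Since $x_{k+1}\in\hua{A}{-}{M}$, applying the conjugate-associator to both sides and using the conjugate-associative analogue of Lemma \ref{lem: [p,q,rm]=[p,q,r]m} gives $0=\sum_{i=1}^k \langle p,q,s_i\rangle x_i$ for all $p,q\in\O$; the induction hypothesis forces $\langle p,q,s_i\rangle=0$ for all $p,q$, hence $s_i\in\spr$, and then $x_{k+1}=\sum_{i=1}^k s_ix_i$ is a nontrivial real relation among the $x_i$, contradicting $\spr$-linear independence. The "moreover" clause is then immediate: if $y=\sum r_ix_i\in\hua{A}{-}{M}$, apply the conjugate-associator, use the analogue of Lemma \ref{lem: [p,q,rm]=[p,q,r]m} to get $0=\sum_i \langle p,q,r_i\rangle x_i$, apply the first part of the lemma to conclude $\langle p,q,r_i\rangle=0$ for all $p,q$, hence $r_i\in\spr$.

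The main obstacle — really the only nonroutine point — is pinning down the correct form of the conjugate-associator identity, i.e. the exact analogue of Lemma \ref{lem: [p,q,rm]=[p,q,r]m}. For genuine associative elements this identity is clean because the left associator is alternating and behaves well; for conjugate associative elements the defining relation $(pq)m=q(pm)$ is not of associator type, so I would need to verify carefully that the "coefficient-only" reduction still goes through, presumably by plugging $rm$ into the defining identity and then using that $m$ itself satisfies it. Once that identity is in hand, every step above is a transcription of the earlier proofs. I would also double-check that $\langle p,q,r\rangle=0$ for all $p,q\in\O$ indeed characterizes $r\in\spr$ — this should follow from the same computation as in Lemma \ref{lem:hua-(conjgt O)=R}, where one sees that the relevant obstruction is again governed by the commutator $[p,q]$, which is nonzero for suitable $p,q$.
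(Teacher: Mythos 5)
Your plan coincides with the paper's proof: the same induction on $n$, with $s_i=-r_ir_{k+1}^{-1}$, and the same reduction of the conjugate-associator applied to $x_{k+1}=\sum s_ix_i$ down to a coefficient condition. The identity you were hesitant about does hold in the expected form: repeatedly applying $a(bm)=(ba)m$ to a conjugate associative $m$ gives $(qp)(rm)-p\bigl(q(rm)\bigr)=\bigl(r(qp)\bigr)m-\bigl((rq)p\bigr)m=-[r,q,p]\,m$, and $[r,q,p]=0$ for all $p,q\in\O$ forces $r\in\R$ since the nucleus of $\O$ is $\R$; this is exactly the computation the paper carries out in the inductive step. The only slip is in your base case: with the defining relation $(pq)m=q(pm)$ you want $p=r$, $q=r^{-1}$ (not $p=r^{-1}$, $q=r$) so that $x=(rr^{-1})x=r^{-1}(rx)=0$.
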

\begin{proof}
	Induction on $n$ as before. The case of $n=1$ is trivial. Assume the lemma holds for degree $k$, we will prove it for $k+1$. Suppose $\sum_{i=1}^{k+1}r_ix_i=0$ and $r_{k+1}\neq 0$.
	Denote $s_i=-r_ir_{k+1}^{-1}$. Therefore
	$$x_{k+1}=\sum _{i=1}^k s_ix_i.$$
	Since $x_{k+1}\in \hua{A}{-}{M}$, thus for all   $ p,q,\in \O$,
	\begin{align*}
	0&=(qp)x_{k+1}-p(qx_{k+1})\\
	&=\sum_{i=1}^k(qp)(s_ix_i)-p(q(s_ix_i))\\
	&=\sum_{i=1}^k (s_i(qp))x_i-p((s_iq)x_i)\\
	&=-\sum _{i=1}^k\left([s_i,q,p]\right)x_i
	\end{align*}
	Hence by  induction hypothesis we obtain that $s_i\in \spr$.
	The rest of the proof runs much the same as in Lemma \ref{lem:free mod unique represt}.
\end{proof}

\section{The structure of left $\spo$-moudles}
In this section, we are in a position to  formulate the structure of any left $\O$-module.  We will first concerned with the finite dimensional case and then the general case follows.
\subsection{Finite dimensional $\spo$-modules}
It is well-known (for example, \cite{baez2002octonions,harvey1990spinors}) that the octonions have a very close relationship with spinors in $7,8$ dimensions. In particular, multiplication
by imaginary octonions is equivalent to Clifford multiplication on spinors in $7$ dimensions.
It turns out that 	the category of left $\spo$-modules is isomorphic to the category of left $\clifd{7}$-modules.

For any left $\spo$-module $M$, according to $[e_i,e_j,x]=-[e_j,e_i,x]$, we get the left multiplication operator $L$ satisfies:
$$L_{e_i}L_{e_j}+L_{e_j}L_{e_i}=-2\delta_{ij}Id.$$
Hence $\mathbb{A}:=Span_\spr \{L_{e_i}\mid i=0,1,\ldots,7  \}$ is a Clifford algebra over $\spr^7$.
This yields a $\clifd{7}$-module structure on $M$, because  from  the universal properties of $Cl_7$, we have a non-trivial ring homomorphism $$\rho:\clifd{7}\rightarrow \mathbb{A}\rightarrow \text{End}_\spr(M).$$ Denote this  $\clifd{7}$-module by ${}_{\clifd{7}}M$, or just $M$, and the $\clifd{7}$-scalar multiplication is given  by
$$ g_{\alpha}m: = e_{\alpha_1}(e_{\alpha_2}(\cdots(e_{\alpha_k}m)))$$ for any  $\alpha\in \P$.
Here $g_{\alpha}=g_{\alpha_1}\cdots g_{\alpha_k}:=L_{e_{\alpha_1}}\cdots L_{e_{\alpha_k}}$.
Let $f\in \Hom _\spo(M,M')$ be a left $\spo$-homomorphism, where $M,M'$ are two left $\spo$-modules. Then $$f(L_{e_i}x)=f(e_ix)=e_if(x)=L_{e_i}f(x).$$
and hence $f(g_\alpha x)=g_\alpha f(x)$. This means $f\in \Hom_{\clifd{7}}(M,M')$.
Conversely, for any left $\clifd{7}$-module $M$, let $\{g_i\}_{i=1}^7$ be a basis in $\spr^7$. Define:
$$e_i\hat{\cdot}x:=g_ix.$$
Then $$e_i\hat{\cdot}(e_j\hat{\cdot}x)=g_i(g_jx)=(g_ig_j)x=(-2\delta_{ij}-g_jg_i)x=-2\delta_{ij}x-e_j\hat{\cdot}(e_i\hat{\cdot}x)$$
This implies that $$e_i\hat{\cdot}(e_j\hat{\cdot}x)+e_j\hat{\cdot}(e_i\hat{\cdot}x)=(e_ie_j+e_je_i)\hat{\cdot}x$$
by transposition of terms, we obtain$$[e_i,e_j,x]=-[e_j,e_i,x].$$
This yields for any $p,q\in \spo$, $[p,q,x]=-[q,p,x]$. Consequently
$M$ is a left $\spo$-module. For any $f\in \Hom_{\clifd{7}}(M,M')$, $$f(e_i\hat{\cdot}x)=f(g_ix)=g_if(x)=e_i\hat{\cdot}f(x)$$
Therefore $f\in \Hom_\spo(M,M')$.
In summary, we  get the following important result:
\begin{thm}\label{thm:O mod= Cl7 mod}
	The category of left $\spo$-module is isomorphic to the category of left $\clifd{7}$-module. Moreover, the only two kinds of simple $\spo$-module are  $\spo$ and $\overline{\spo}$ up to isomorphsim.
\end{thm}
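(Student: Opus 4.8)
The plan is to turn the two constructions sketched just above the statement into a pair of mutually inverse functors, and then to read the simple objects off the Wedderburn decomposition of $\clifd{7}$. First I would define $F\colon\spo\text{-}\mathbf{Mod}\to\clifd{7}\text{-}\mathbf{Mod}$: for a left $\spo$-module $M$, axiom (iii) gives $L_{e_i}L_{e_j}+L_{e_j}L_{e_i}=-2\delta_{ij}\,\mathrm{Id}$, so the map $\spr^7\to\End_\spr(M)$, $f_i\mapsto L_{e_i}$, makes $\mathrm{Span}_\spr\{L_{e_\alpha}\}$ a Clifford algebra over $\spr^7$, and the universal property of $\clifd{7}$ produces a unital $\spr$-algebra homomorphism $\rho\colon\clifd{7}\to\End_\spr(M)$, i.e. a left $\clifd{7}$-module structure on $M$. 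Conversely I would define $G\colon\clifd{7}\text{-}\mathbf{Mod}\to\spo\text{-}\mathbf{Mod}$ by fixing generators $g_1,\dots,g_7$ of $\clifd{7}$ and setting $p\,\hat{\cdot}\,x:=p_0x+\sum_{i=1}^7 p_i(g_ix)$ for $p=p_0+\sum p_ie_i$. Axioms (i), (ii), (iv) are immediate; for (iii) one notes that the left associator $[p,q,x]_{\hat{\cdot}}$ is $\spr$-bilinear in $(p,q)$, so it suffices to check $[e_i,e_j,x]_{\hat{\cdot}}=-[e_j,e_i,x]_{\hat{\cdot}}$, which follows from $g_ig_j+g_jg_i=-2\delta_{ij}$ together with associativity of the $\clifd{7}$-action, exactly as in the displayed computation preceding the statement.

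On morphisms both functors act as the identity on underlying $\spr$-linear maps: if $f$ commutes with every $e_i$ it commutes with every $L_{e_i}=g_i$ hence with all of $\clifd{7}$ (which is generated by $1,g_1,\dots,g_7$), and conversely; so $\Hom_\spo(M,M')$ and $\Hom_{\clifd{7}}(M,M')$ coincide as subsets of $\Hom_\spr(M,M')$. It remains to see $F$ and $G$ are mutually inverse. For $GFM$ one has $e_i\,\hat{\cdot}\,x=L_{e_i}x=e_ix$, so $GFM=M$ with its original action; for $FGM$, the reconstructed operators $x\mapsto g_ix$ agree with the original $\clifd{7}$-action on the generators $g_i$, hence induce the same $\rho$. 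Since everything is literally equal on objects and morphisms, this is an isomorphism of categories $\spo\text{-}\mathbf{Mod}\cong\clifd{7}\text{-}\mathbf{Mod}$.

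For the last assertion I would transport the question across this isomorphism and use $\clifd{7}\cong M(8,\spr)\oplus M(8,\spr)$. This algebra is semisimple, and a product of two matrix algebras has exactly two isomorphism classes of simple left modules, the two copies of $\spr^8$ on which one factor acts standardly and the other by zero; each has real dimension $8$, and every nonzero $\clifd{7}$-module is a direct sum of these, so any $\clifd{7}$-module of real dimension $8$ is simple. Now $\spo$ and $\overline{\spo}$ (Example \ref{eg:simple O}) are left $\spo$-modules whose underlying real vector space is $8$-dimensional, hence simple, and they are non-isomorphic because $\dim_\spr\huaa{\spo}=1\neq0=\dim_\spr\huaa{\overline{\spo}}$ (Proposition \ref{prop:f(huaaM) in huaaN}). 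Two non-isomorphic simple modules in a category with only two simples up to isomorphism must exhaust them, which is the claim.

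I do not anticipate a real obstacle: the argument is essentially bookkeeping. The one point that needs care is the construction of $G$ — the reduction of axiom (iii) to the pairs $(e_i,e_j)$, which rests on $\spr$-bilinearity of the associator and on associativity of the Clifford action — together with the observation that $F$ and $G$ compose to the identity rather than merely to a naturally isomorphic functor; this last point (which uses that $1,g_1,\dots,g_7$ generate $\clifd{7}$) is what upgrades the conclusion from an equivalence to an isomorphism of categories.
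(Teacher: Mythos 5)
Your proposal is correct and follows essentially the same route as the paper: the universal property of $\clifd{7}$ applied to the operators $L_{e_i}$ for one direction, the linear extension of $e_i\hat{\cdot}x:=g_ix$ with the bilinearity reduction of axiom (iii) for the other, identity of the two Hom-sets, and then the Wedderburn decomposition $\clifd{7}\cong M(8,\spr)\oplus M(8,\spr)$ plus the computation $\huaa{\spo}=\spr$ versus $\huaa{\overline{\spo}}=\{0\}$ to identify the two simples. Your treatment is somewhat more explicit than the paper's about why $F$ and $G$ are strictly inverse (hence an isomorphism rather than a mere equivalence), but the substance is the same.
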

\begin{proof}
	 Naturaly we have two categories $\O$-$\mathbf{Mod}$ and $\C$-$\mathbf{Mod}$.
	 \begin{eqnarray*}
	 	T\colon\O\text{-}\mathbf{Mod} &\longrightarrow& \C\text-\mathbf{Mod}
	 	\\
	 	{}_{\O}M &\mapsto& {}_{{\C}}M\end{eqnarray*}
	 and for any morphism $\varphi \in \text{Hom}_\O(M,N)$, it maps to  $T(\varphi)\colon{}_{{\C}}M\to {}_{{\C}}N$, which is given by $$T(\varphi):m\mapsto \varphi(m).$$
	
 Clearly, this is an isomorphism by above discussion.  As is well known, $\clifd{7}$ is a semi-simple algebra and $$\clifd{7}\cong M(8,\spr) \oplus M(8,\spr),$$ 
	the only simple $\clifd{7}$-module is $(\spr^8,0)$ and $(0,\spr^8)$ up to isomorphism \cite{gilbert1991clifford}. Hence there  also only exist two kinds of simple $\spo$-module. In view of   Example \ref{eg:simple O}, we thus conclude that  $\spo$ and $\overline{\spo}$ are the two different   kinds of simple $\spo$-modules. This completes the proof.
\end{proof}
\begin{cor}\label{cor:fd O-M}
If $M$ is of finite real dimension, then
 \begin{equation}\label{iso}
M\cong \O^{ n_1}\oplus \overline{\spo}^{n_2}.
\end{equation}	 where  $(n_1,n_2)$ is the \type\ of $M$. In particular, $\mathrm{dim}_\spr M=8(n_1+n_2)$.

\end{cor}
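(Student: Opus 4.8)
The plan is to deduce the statement from Theorem~\ref{thm:O mod= Cl7 mod} together with the Artin--Wedderburn structure theory of semisimple algebras. First I would transport $M$ across the category isomorphism to a finite-dimensional left $\clifd{7}$-module (this does not change the underlying real vector space, so $\dim_\spr$ is preserved). Since $\clifd{7}\cong M(8,\spr)\oplus M(8,\spr)$ is a finite-dimensional semisimple $\spr$-algebra with exactly two isomorphism classes of simple modules --- the minimal left ideals $\spr^8$ sitting in the first and the second matrix factor respectively --- every finite-dimensional left $\clifd{7}$-module is a direct sum of copies of these two simples. Pulling this decomposition back through the inverse functor and invoking the identification in Theorem~\ref{thm:O mod= Cl7 mod} of the two simple left $\spo$-modules with $\spo$ and $\overline{\spo}$, we obtain $M\cong\spo^{n_1}\oplus\overline{\spo}^{n_2}$ for some nonnegative integers $n_1,n_2$; it then remains only to pin down $n_1$ and $n_2$.

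For that, I would note that both $\huaa{\cdot}$ and $\hua{A}{-}{\cdot}$ commute with finite direct sums --- an element of $A\oplus B$ is (conjugate) associative iff each of its two components is, directly from the componentwise module operations --- and are carried bijectively onto one another by any $\spo$-isomorphism: for $\huaa{\cdot}$ this is Proposition~\ref{prop:f(huaaM) in huaaN} applied to $f$ and $f^{-1}$, and for $\hua{A}{-}{\cdot}$ it follows from $f\big((pq)x-q(px)\big)=(pq)f(x)-q(pf(x))$. Combining this with the values $\huaa{\spo}=\spr$ and $\huaa{\overline{\spo}}=\{0\}$ (Example~\ref{eg:simple O}), $\hua{A}{-}{\overline{\spo}}=\spr$ (Lemma~\ref{lem:hua-(conjgt O)=R}), and $\hua{A}{-}{\spo}=\{0\}$ --- for the last, if $m\in\hua{A}{-}{\spo}$ then $[p,q]m=[p,q,m]$ for all $p,q\in\spo$ exactly as in the proof of Lemma~\ref{lem:huaa cap huaa-=0}, and comparing real parts across the quaternionic triples of the Fano graph (the octonion associator having no real part) forces $m\in\spr$, whence $m=0$ again by Lemma~\ref{lem:huaa cap huaa-=0} --- we get $\huaa{M}\cong\spr^{n_1}$ and $\hua{A}{-}{M}\cong\spr^{n_2}$ as real vector spaces. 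Hence $\dim_\spr\huaa{M}=n_1$ and $\dim_\spr\hua{A}{-}{M}=n_2$, i.e.\ $(n_1,n_2)$ is exactly the \type\ of $M$, which is \eqref{iso}; and since $\dim_\spr\spo=\dim_\spr\overline{\spo}=8$ we read off $\dim_\spr M=8(n_1+n_2)$.

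I do not anticipate a genuine obstacle here: the argument is essentially a translation of the classification of finitely generated semisimple modules across Theorem~\ref{thm:O mod= Cl7 mod}. The one point deserving care is the bookkeeping that ties the abstract Wedderburn summands to the two concrete modules $\spo$ and $\overline{\spo}$ --- rather than to unlabelled simples --- and verifies that the invariants $\huaa{\cdot}$ and $\hua{A}{-}{\cdot}$ genuinely separate them; but this is handled entirely by Example~\ref{eg:simple O} and Lemmas~\ref{lem:hua-(conjgt O)=R} and~\ref{lem:huaa cap huaa-=0}, all already in place. If one preferred to avoid invoking Wedderburn, an equivalent route would be to show directly that $M$ is a semisimple object of $\spo$-$\mathbf{Mod}$ (every submodule is a direct summand, e.g.\ by transporting the corresponding fact to the $\clifd{7}$-side), decompose it into simple summands, and count the associative and conjugate associative dimensions as above.
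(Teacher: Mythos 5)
Your proof is correct and follows essentially the same route as the paper's: transport $M$ to a $\clifd{7}$-module via Theorem~\ref{thm:O mod= Cl7 mod}, apply Wedderburn to $\clifd{7}\cong M(8,\R)\oplus M(8,\R)$, identify the two simple summands with $\O$ and $\overline{\O}$, and read off the type from the compatibility of $\huaa{\cdot}$ and $\hua{A}{-}{\cdot}$ with direct sums and isomorphisms. The only addition is your explicit verification that $\hua{A}{-}{\O}=\{0\}$, a point the paper leaves implicit.
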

\begin{proof}
	By Theorem \ref{thm:O mod= Cl7 mod}, we can regard $M$ as a $\clifd{7}$-module ${}_{\C}M$.
	It is known that $\clifd{7}$ is a semi-simple algebra, we have
	$$\clifd{7}\cong M(8, \R)M \oplus M(8, \R).$$	 
	This means that $\clifd{7}$ has exactly two isomorphism classes of simple $\clifd{7}$-
	modules (see for example \cite[Chap5, Thm10]{Alperin2012gtm162}).
	In view of Wedderburn's Theorem for central simple algebras therefore, we conclude $${}_{\C}M\cong {}_{\C}S_1^{n_1}\oplus {}_{\C}S_2^{n_2}$$
 where $S_1,S_2$ are  the representation  of the two isomorphism classes of simple $\C$-modules.
 Thus by Theorem \ref{thm:O mod= Cl7 mod}, $${}_{\O}M\cong{}_{\O}S_1^{ n_1}\oplus{}_{\O}S_2^{ n_2}.$$  In particular, let $S_1=\spo,S_2=\overline{ \spo}$, then we get the conclusion as desired. We next show that $(n_1,n_2)$ is just the type. By definition, one can prove $\huaa{M\oplus N}=\huaa{M}\oplus\huaa{N}$ and $\hua{A}{-}{M\oplus N}=\hua{A}{-}{M}\oplus\hua{A}{-}{N}$. Then it follows from  Lemma \ref{lem:hua-(conjgt O)=R} that
 $\dim_{\R}\huaa{\O^{ n_1}\oplus \overline{\spo}^{n_2}}=n_1$ and $\dim_{\R}\hua{A}{-}{\O^{ n_1}\oplus \overline{\spo}^{n_2}}=n_2$. This completes the proof.
\end{proof}

We can  give a complete description of the set of  homomorphisms between two finite dimensional left $\O$-modules.
\begin{thm} Let $M, N$ be  two left $\O$-modules of finite dimension. Suppose the type of $M$ and $N$ are $(m_1,m_2),\; (n_1,n_2)$ respectively. Then
	\begin{equation}\label{mod-matrix-rep}
	 \text{Hom}_{\O}(M,N)\cong M_{n_1\times m_1}(\R)\bigoplus M_{n_2\times m_2}(\R).
	 \end{equation}
\end{thm}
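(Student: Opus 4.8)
The plan is to reduce the computation of $\Hom_\O(M,N)$ to the classification already in hand. By Corollary~\ref{cor:fd O-M} we may replace $M$ and $N$ by their normal forms $M\cong \O^{m_1}\oplus\overline\O^{m_2}$ and $N\cong\O^{n_1}\oplus\overline\O^{n_2}$; since $\Hom_\O(-,-)$ is additive in each variable, it suffices to understand the four ``building block'' Hom-spaces $\Hom_\O(\O,\O)$, $\Hom_\O(\O,\overline\O)$, $\Hom_\O(\overline\O,\O)$, and $\Hom_\O(\overline\O,\overline\O)$, and then assemble a matrix of such blocks. Concretely I would first prove the following four claims: $\Hom_\O(\O,\O)\cong\R$, $\Hom_\O(\overline\O,\overline\O)\cong\R$, and $\Hom_\O(\O,\overline\O)=\Hom_\O(\overline\O,\O)=0$. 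Granting these, a homomorphism $M\to N$ decomposes as a matrix whose $(\O\text{-row},\O\text{-column})$ part is an $n_1\times m_1$ matrix over $\R$, whose $(\overline\O,\overline\O)$ part is an $n_2\times m_2$ matrix over $\R$, and whose mixed parts vanish; this gives exactly \eqref{mod-matrix-rep}.

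For the diagonal blocks: if $f\in\Hom_\O(\O,\O)$, then $f(q)=f(q\cdot 1)=q f(1)$ for all $q\in\O$, so $f$ is right multiplication by $f(1)$; conversely $f$ is an $\O$-homomorphism iff $(pq)f(1)=p(qf(1))$ for all $p,q$, i.e. iff $f(1)\in\huaa{\O}=\R$ (using that $\huaa{\O}=\R$ from the examples). Hence $\Hom_\O(\O,\O)\cong\R$. The same argument with the twisted action gives $\Hom_\O(\overline\O,\overline\O)$: an $\O$-homomorphism $f$ is determined by $f(1)$ via $f(x)=f(\bar x\,\hat\cdot\,1)=\bar x\,\hat\cdot\,f(1)$ — more carefully, $f(x)=f(x\hat\cdot 1)$ and $x\hat\cdot 1 = \bar x$, so $f$ is determined by its values, and compatibility forces $f(1)\in\hua{A}{-}{\overline\O}=\R$ by Lemma~\ref{lem:hua-(conjgt O)=R}; thus $\Hom_\O(\overline\O,\overline\O)\cong\R$.

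For the off-diagonal blocks the cleanest route is Proposition~\ref{prop:f(huaaM) in huaaN}: any $f\in\Hom_\O(\O,\overline\O)$ satisfies $f(\huaa{\O})\subseteq\huaa{\overline\O}=\{0\}$, so $f(\R)=0$; but $\R$ contains $1$, which generates $\O$ as an $\O$-module, so $f\equiv 0$. For $f\in\Hom_\O(\overline\O,\O)$, I would instead use the conjugate-associative version: one checks directly (or proves a companion to Proposition~\ref{prop:f(huaaM) in huaaN}) that $f$ sends conjugate associative elements to conjugate associative elements, so $f(\hua{A}{-}{\overline\O})\subseteq\hua{A}{-}{\O}$; since $\hua{A}{-}{\O}$ is easily seen to be $\{0\}$ (the defining identity $(pq)x=q(px)$ together with $x\in\huaa{\O}$ forces $x=0$ by Lemma~\ref{lem:huaa cap huaa-=0}, or one computes directly) while $\hua{A}{-}{\overline\O}=\R\ni 1$ generates $\overline\O$, we get $f\equiv 0$. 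Either way both mixed blocks vanish.

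Finally I would assemble: writing a general $f\in\Hom_\O(M,N)$ as a block matrix indexed by the simple summands, the previous paragraphs identify each $\O$-$\O$ entry with a scalar in $\R$, each $\overline\O$-$\overline\O$ entry with a scalar in $\R$, and kill every mixed entry; collecting the surviving scalars into an $n_1\times m_1$ real matrix and an $n_2\times m_2$ real matrix, and checking this assignment is an $\R$-algebra (here just $\R$-vector space) isomorphism, yields $\Hom_\O(M,N)\cong M_{n_1\times m_1}(\R)\oplus M_{n_2\times m_2}(\R)$. The only mildly delicate point — the main obstacle — is the vanishing of $\Hom_\O(\overline\O,\O)$, since $\huaa{}$ alone does not see $\overline\O$; this is why one needs the conjugate-associative invariant, which behaves functorially for exactly the same trivial reason as Proposition~\ref{prop:f(huaaM) in huaaN}. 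Everything else is bookkeeping.
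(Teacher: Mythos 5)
Your proposal is correct and follows the same overall architecture as the paper's proof: reduce $M$ and $N$ to the normal forms of Corollary~\ref{cor:fd O-M}, compute the four block Hom-spaces, and assemble a block matrix whose mixed blocks vanish. The diagonal blocks are handled essentially as in the paper ($f(q)=qf(1)$ with $f(1)$ forced into $\huaa{\O}=\R$, resp.\ into $\hua{A}{-}{\overline{\O}}=\R$). The one genuine difference is how the off-diagonal blocks are killed: the paper disposes of $\Hom_{\O}(\O,\overline{\O})$ (and implicitly the other mixed block) in one stroke by Schur's Lemma transported through the category isomorphism with $\clifd{7}$-modules, whereas you argue elementarily that any homomorphism carries $\huaa{\cdot}$ into $\huaa{\cdot}$ (Proposition~\ref{prop:f(huaaM) in huaaN}) and, by the same trivial computation, $\hua{A}{-}{\cdot}$ into $\hua{A}{-}{\cdot}$, and that $1$ generates each simple summand. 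Your route is self-contained and avoids the Clifford machinery; the paper's is shorter once that machinery is available. One small imprecision: your first justification that $\hua{A}{-}{\O}=\{0\}$ leans on Lemma~\ref{lem:huaa cap huaa-=0}, which only excludes elements of $\hua{A}{-}{\O}\cap\huaa{\O}$, not all of $\hua{A}{-}{\O}$; but your fallback direct computation (or simply the fact that $\O$ has type $(1,0)$ by Corollary~\ref{cor:fd O-M}) closes this, so there is no real gap.
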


\begin{proof}

It follows from Corollary \ref{cor:fd O-M} that,
\begin{equation}\label{mod-matrix-rep55} M\cong \O^{m_1}\oplus\conjgt{\spo}^{ m_2}, \qquad N\cong  \O^{n_1}\oplus \conjgt{\spo}^{n_2}.
\end{equation}
We claim each homomorphism of ${Hom}_{\O}(M,N)$ is of the form
$$\left(
	\begin{array}{cccccc}
	\theta_{11} & \ldots & \theta_{1m_1} & 0 & 0 & 0 \\
	\vdots & \ldots & \vdots & 0 & 0 & 0 \\
	\theta_{n_11} & \ldots & \theta_{n_1m_1} & 0 & 0 & 0 \\
	0 & 0 & 0 & \psi_{11} & \ldots & \psi_{1m_2} \\
	0 & 0 & 0 & \vdots & \ldots & \vdots \\
	0 & 0 & 0 & \psi_{n_21} & \ldots & \psi_{n_2m_2}\\
	\end{array}
	\right).$$
	Inneed,  we have  $\text{Hom}_{\O}(\O,\O)=\R  \; \text{Id}_{\O}$,  $\Hom_\spo(\overline{ \spo},\overline{ \spo})= \R \;\text{Id}_{\O}$, $\Hom_\spo(\spo,\overline{ \spo})\cong \{0\}$.
	Notice that for any $\varphi\in \text{Hom}_{\O}(\O,\O)$ we have $$\varphi(q)=q\varphi(1),$$  we only need  to show that $\varphi(1)\in \mathbb{R}$.    Propositon \ref{prop:f(huaaM) in huaaN} then yields  the conclusion as required.
%
	As for the case   $ \Hom_\spo(\overline{\spo},\overline{\spo})\cong\spr $. Given $f\in \Hom_\spo(\overline{\spo},\overline{\spo})$, write $f(1)=r$, then $$f(x)=f(\overline{x}\hat{\cdot}1)=\overline{x}\hat{\cdot}f(1)=xr.$$ Since $f$ is an $\spo$-homomorphism, it follows that for all $p,x\in \spo$,
	$$(p\hat{\cdot}x)r=f(p\hat{\cdot}x)=p\hat{\cdot}f(x)=p\hat{\cdot}(xr).$$
	This yields $[\overline{p},x,r]=0$ for all $p,x\in \spo$, consequently $r\in \spr$ as desired.
	Finally, it follows from Schur's Lemma that
	$$\Hom_\spo(\spo,\overline{ \spo})\cong \Hom_{\clifd{7}}({}_{\clifd{7}}\spo,{}_{\clifd{7}}\overline{ \spo})=\{0\}.$$
This completes the proof.
\end{proof}

\


At last, we point out another relation  between  $Spin(7)$ and  octonions.  More precisely, we can provide a realization of spinor space over $Spin(7)$ in terms of octonions $\O$.

Here  we refer to the concept of (Weyl) spinor spaces as  the irreducible complex representations of $Spin(p,q)$; see ~\cite[Chap. $\uppercase\expandafter{\romannumeral 1}$ sec. 4]{Sommen2012spinor}. By definition,
\[Spin(7)=\{v(x_1)v(x_2)\cdots v(x_{2k}):x_i\in \R^7,\abs{x_i}=1,i=1,\ldots,2k\}\]
It is well-known that 
$$ Spin(7) \subset\C^{+},$$
where $\C^+$ is $\R$-linearly generated by $\{g_{\alpha}\mid\alpha\in \P,\abs{\alpha}=2k\}$.

Let $\mathbb{C}l_7=\C\otimes \mathbb{C}$ be the complexification of $\C$. It is a complex Clifford algebra and there exists a $\mathbb{C}$-algebra isomorphism
\[\mathbb{C}l_7\cong M(8,\mathbb{C})\oplus M(8,\mathbb{C}).\]

Notice that  ${\mathbb{C}l_7}^+$ is $\mathbb C$-linearly generated by $Spin(7)$ and $${\mathbb{C}l_7}^+\cong \mathbb{C}l_6\cong M(8,\mathbb{C})$$ as $\mathbb C$-algebra. This means that ${\mathbb{C}l_7}^+$  is a simple algebra. Therefore, 
$Spin(7)$  has only one  irreducible representation. We denote by  $S_6$ the irreducible representation of  $Spin(7)$. It is also a $\mathbb{C}l_6$-module.

As  seen before, $\O$ has a $C\ell_7$-module structure.  Thanks to 
\begin{eqnarray}\label{Cl_6}
L_{e_1}\cdots L_{e_7}=-\text{Id}
\end{eqnarray}
it admits  a $C\ell_6$-module structure ${}_{C\ell_6}{\O}$, too.
Indeed,   let $\A$ be the algebra generated by $\{L_{e_i}\mid i=1,\ldots,7\}$.  In virtue of  \eqref{Cl_6}, we know that $\A$ is not the universal Clifford algebra over $\R^7$, so that
it is isomorphic to $C\ell_6$; see \cite{gilbert1991clifford}. Consequently,  we have a ring homomorphism
\[C\ell_6\cong \A \hookrightarrow \text{End}_{\R}(\O), \]
which provides a $C\ell_6$-module structure ${}_{C\ell_6}\O$ on $\O$.

As a result,   ${}_{C\ell_6}\O\otimes \mathbb{C}$ is a simple $\mathbb{C}l_6$-module, which gives the realization of spinor space over $Spin(7)$.

\subsection{Structure of general left $\spo$-modules}
In this subsection, we proceed to study the  characterization of the general  left $\O$-modules.
We first introduce the notion of basis on $\O$-modules.
\begin{mydef}
	Let $M$ be a left $\spo$-module. A subset  $S\subseteq M$ is called a \textbf{basis} if $S$ is $\O$-linearly independent and $M=\O S$.
\end{mydef}
\begin{thm}\label{thm:strc of left O-mod}
	Each left $\spo$-module $M$ has a basis $S$ included by $\huaa{M}\cup \hua{A}{-}{M}$. In particular,
		 $$M=\spo\huaa{M}\oplus {\spo}\hua{A}{-}{M}.$$
	
	Moreover, let $\Lambda_1,\Lambda _2$ be two index sets satisfying $\left|\Lambda_1\right|=\left|S\cap \huaa{M}\right|$, $\left|\Lambda_2\right|=\left|S\cap\hua{A}{-}{M}\right|$, here $\left|S\right|$ stands for the cardinality of $S$, then  $M\cong(\oplus_{i\in \Lambda_1}\spo) \bigoplus (\oplus_{i\in \Lambda_2}\overline{\spo})$.
\end{thm}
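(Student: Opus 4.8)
The plan is to construct the basis $S$ directly and then leverage the finite-dimensional machinery. First I would show that for any left $\spo$-module $M$ one has the internal direct sum decomposition $M = \spo\huaa{M} \oplus \spo\hua{A}{-}{M}$. The key input here is the fact (quoted in the introduction, and provable via the $\clifd{7}$-correspondence of Theorem \ref{thm:O mod= Cl7 mod}) that every element $m \in M$ generates a \emph{finite-dimensional} submodule $\generat{m}_\O$. By Corollary \ref{cor:fd O-M}, that finite-dimensional submodule is isomorphic to $\spo^{k_1}\oplus\overline{\spo}^{k_2}$, so it is spanned over $\spo$ by its own associative and conjugate-associative elements; since these sit inside $\huaa{M}$ and $\hua{A}{-}{M}$ respectively (associativity is an intrinsic condition, unaffected by passing to a submodule), we get $m \in \spo\huaa{M} + \spo\hua{A}{-}{M}$. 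Hence $M = \spo\huaa{M} + \spo\hua{A}{-}{M}$.

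Next I would verify the sum is direct, i.e.\ $\spo\huaa{M} \cap \spo\hua{A}{-}{M} = \{0\}$. Suppose $x = \sum r_i a_i = \sum s_j b_j$ with $a_i \in \huaa{M}$, $b_j \in \hua{A}{-}{M}$, $r_i, s_j \in \spo$. Working inside the finite-dimensional submodule generated by all the $a_i, b_j$ (again finite-dimensional, as a sum of finitely many finite-dimensional submodules), I can apply Corollary \ref{cor:fd O-M}: that submodule splits as $\spo^{k_1}\oplus\overline{\spo}^{k_2}$, its associative part is exactly the $\spo^{k_1}$ summand's copy of $\spr^{k_1}$, its conjugate-associative part the $\overline{\spo}^{k_2}$ summand's $\spr^{k_2}$, and $\spo\huaa{} \cap \spo\hua{A}{-}{} = \spo^{k_1}\cap\overline{\spo}^{k_2} = 0$ inside it. This gives $x = 0$. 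Combined with the previous paragraph, $M = \spo\huaa{M}\oplus\spo\hua{A}{-}{M}$.

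Now I would produce the basis $S$. Pick an $\spr$-basis $\{a_i\}_{i\in\Lambda_1}$ of the real vector space $\huaa{M}$ and an $\spr$-basis $\{b_j\}_{j\in\Lambda_2}$ of $\hua{A}{-}{M}$, and set $S = \{a_i\}\cup\{b_j\}\subseteq \huaa{M}\cup\hua{A}{-}{M}$. Then $\spo S = \spo\huaa{M} + \spo\hua{A}{-}{M} = M$. For $\spo$-linear independence: a relation $\sum r_i a_i + \sum s_j b_j = 0$ forces $\sum r_i a_i = -\sum s_j b_j \in \spo\huaa{M}\cap\spo\hua{A}{-}{M} = \{0\}$ by directness, and then $\sum r_i a_i = 0$ with the $a_i$ being $\spr$-linearly independent associative elements gives all $r_i = 0$ by Lemma \ref{lem:free mod unique represt}, while $\sum s_j b_j = 0$ gives all $s_j = 0$ by Lemma \ref{lem:hua- xishu }. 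So $S$ is a basis, $|S\cap\huaa{M}| = |\Lambda_1| = \dim_\spr\huaa{M}$ and $|S\cap\hua{A}{-}{M}| = |\Lambda_2| = \dim_\spr\hua{A}{-}{M}$.

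Finally, for the isomorphism $M\cong (\oplus_{i\in\Lambda_1}\spo)\oplus(\oplus_{i\in\Lambda_2}\overline{\spo})$: define the obvious map sending the standard generator of the $i$-th copy of $\spo$ to $a_i$ and the real generator of the $j$-th copy of $\overline{\spo}$ to $b_j$, extended $\spo$-linearly; concretely $(p_i)_i \oplus (q_j)_j \mapsto \sum p_i a_i + \sum q_j b_j$ (note $q_j$ acts on $b_j$ as a conjugate-associative element, matching the $\overline{\spo}$ structure — one checks the module axioms transfer because $b_j$ is conjugate associative). Surjectivity is $\spo S = M$; injectivity is the $\spo$-linear independence of $S$ just established; that it is an $\spo$-homomorphism is a direct check using that $a_i$ is associative and $b_j$ conjugate associative. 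The main obstacle, and the only genuinely non-routine point, is the directness of the sum and the well-definedness of the $\overline{\spo}$-summand identification — both of which I reduce to the finite-dimensional classification via the ``every element generates a finite-dimensional submodule'' principle; once that reduction is in hand the rest is bookkeeping with Lemmas \ref{lem:free mod unique represt} and \ref{lem:hua- xishu }.
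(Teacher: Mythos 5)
Your proposal is correct and follows essentially the same route as the paper: both arguments reduce the spanning statement to the finite-dimensional classification (Corollary \ref{cor:fd O-M}) via the fact that $\generat{m}_\spo$ is finite-dimensional (Lemma \ref{lem:<m> is finite dim}), and both take $S$ to be the union of real bases of $\huaa{M}$ and $\hua{A}{-}{M}$. Your choice to establish the directness of $\spo\huaa{M}\oplus\spo\hua{A}{-}{M}$ first and then deduce the $\spo$-linear independence of the mixed set $S$ from it is a slightly cleaner organization than the paper's, which cites Lemmas \ref{lem:free mod unique represt} and \ref{lem:hua- xishu } even though each of those only treats a purely associative or a purely conjugate-associative family. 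One small correction to your final step: the isomorphism onto the $\overline{\spo}$-summands must send $(q_j)_j$ to $\sum \overline{q_j}\,b_j$ rather than $\sum q_j b_j$; for $b_j$ conjugate associative the map $q\mapsto qb_j$ fails to be an $\spo$-homomorphism out of $\overline{\spo}$ (test $p=q=e_1$: $(\overline{e_1}e_1)b_j=b_j$ while $e_1(e_1b_j)=-b_j$), whereas $q\mapsto\overline{q}\,b_j$ works, exactly as in the paper's map $\varphi(p)=\overline{p}x$ in the proof of Theorem \ref{lem:huac+(M)}.
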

Its proof will depend on the following lemma. 
\begin{lemma}\label{lem:<m> is finite dim}
	Let $M$ be a left $\spo$-module, then $\generat{m}_\spo$ is  finite dimensional for any $m\in M$. More precisey, the dimension is at most $128$. 
\end{lemma}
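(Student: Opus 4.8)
The plan is to exploit the identity \eqref{eq:[p,q,r]m+p[q,r,m]=[pq,r,m]-[p,qr,m]+[p,q,rm]} to show that, for a fixed $m\in M$, the submodule $\generat{m}_\O$ is spanned over $\R$ by finitely many elements. The natural set of generators to try is
\[
\{m\}\cup\{p_1 m\}\cup\{p_1(p_2 m)\}\cup\{p_1(p_2(p_3 m))\}\cup\cdots
\]
where the $p_i$ range over the natural basis $e_0,\ldots,e_7$ of $\O$; since every octonion is an $\R$-linear combination of the $e_i$ and scalar multiplication is $\R$-bilinear, this countable set certainly spans $\generat{m}_\O$ over $\R$. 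The whole point is to bound the \emph{depth} of iterated products one must keep. First I would observe, via the left alternative law and the R.~Moufang identities (or directly from axiom (iii) and its polarizations), that any product $e_{i_1}(e_{i_2}(\cdots(e_{i_k}m)))$ can be rewritten, modulo lower-depth terms, using the associated $\clifd{7}$-action $g_\alpha m$ introduced just before Theorem \ref{thm:O mod= Cl7 mod}. Concretely, $\generat{m}_\O$ is contained in the $\R$-span of $\{g_\alpha m\mid \alpha\in\P(7)\}$ together with the left-multiplication operators: the Clifford algebra $\mathbb{A}=\mathrm{Span}_\R\{L_{e_i}\}$ is the image of a ring homomorphism $\rho:\clifd{7}\to\End_\R(M)$, and $\dim_\R\clifd{7}=2^7=128$.

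The cleaner route, which I would actually carry out, is this: let $V=\rho(\clifd{7})\cdot m=\mathrm{Span}_\R\{g_\alpha m\mid \alpha\in\P(7)\}$. Then $\dim_\R V\le 128$. I claim $V$ is already an $\O$-submodule of $M$ containing $m$. Containment of $m$ is clear ($g_\emptyset m=m$). For closure under the $\O$-action, it suffices to show $e_i\cdot v\in V$ for each $i=0,\ldots,7$ and each $v=g_\alpha m$; but $e_i\cdot(g_\alpha m)=L_{e_i}(g_\alpha m)=(g_i g_\alpha)m$ when $\alpha$ does not contain $i$ in a way that requires reordering — more precisely, $g_i g_\alpha$ is again an $\R$-linear combination of the $g_\beta$'s because $\{g_\beta\mid\beta\in\P(7)\}$ is an $\R$-basis of the associative algebra $\clifd{7}$, so $(g_i g_\alpha)m\in V$. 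Hence $L_{e_i}V\subseteq V$, and since a general $q\in\O$ is $\sum q_i e_i$ with scalar action $\R$-linear, $qV\subseteq V$. Therefore $V$ is an $\O$-submodule, so $\generat{m}_\O\subseteq V$ and $\dim_\R\generat{m}_\O\le\dim_\R V\le 128$.

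The one subtlety to get right — and the step I expect to be the main obstacle — is the identification $e_i\cdot(g_\alpha m)=(g_i g_\alpha)m$. Here $g_\alpha m$ was \emph{defined} as the iterated left multiplication $e_{\alpha_1}(e_{\alpha_2}(\cdots(e_{\alpha_k}m)))$, i.e.\ $g_\alpha=L_{e_{\alpha_1}}\cdots L_{e_{\alpha_k}}$ as operators, and the relations $L_{e_i}L_{e_j}+L_{e_j}L_{e_i}=-2\delta_{ij}\,\mathrm{Id}$ (established in the excerpt from axiom (iii)) are exactly what is needed: they guarantee that the operator algebra $\mathbb{A}$ is a quotient of $\clifd{7}$, so products of the $L_{e_i}$ collapse onto the $\R$-span of the $g_\beta$'s, which has dimension at most $2^7=128$. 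One must be slightly careful that it is the \emph{operator} identity that matters, not any associativity of $\O$ itself; the $L$'s associate because $\End_\R(M)$ is associative, even though $\O$ and its action on $M$ are not. Once this is phrased correctly the bound $128$ drops out immediately, and the lemma — hence the reduction of Theorem \ref{thm:strc of left O-mod} to the finite-dimensional case treated in Corollary \ref{cor:fd O-M} — follows.
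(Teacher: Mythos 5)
Your proof is correct and follows essentially the same route as the paper's: both rest on the operator relations $L_{e_i}L_{e_j}+L_{e_j}L_{e_i}=-2\delta_{ij}\,\mathrm{Id}$ to collapse all iterated products onto the $2^7=128$ spanning vectors $g_\alpha m$, $\alpha\in\mathcal{P}(7)$. Your packaging of this as ``$V=\mathrm{Span}_\R\{g_\alpha m\}$ is already an $\O$-submodule'' is a slightly more careful write-up of the reduction the paper performs directly, but it is not a different argument.
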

\begin{proof}
	$\generat{m}_\spo$ is such module generated by $e_{i_1}(e_{i_2}(\cdots (e_{i_n}m)))$, where $i_k\in\{1,2,\ldots,7\}, n\in \mathbb{N}$. Note that
	$$e_i(e_jm)+e_j(e_im)=(e_ie_j+e_je_i)m=-2\delta_{ij}m,$$
	hence the element defined by $e_{i_1}(e_{i_2}(\cdots (e_{i_n}m)))$ for $n>7$ can be reduced.
	Thus the vectors $\{m,\; e_1m,\;\ldots\;,\;e_7m,e_1(e_2)m,\;\ldots\;,e_1(e_2(\cdots (e_7m)))\}$  will generate $\generat{m}_\spo$,  we conclude that $\dim_\spr \generat{m}_\spo\leqslant C_7^0+C_7^1+\cdots+C_7^7=128$.
\end{proof}
\begin{rem}
	In fact, this property has already appeared in \cite{goldstine1964hilbert}. However, it is worth stressing the essentiality of this property. It  enables us to characterize  the structure of general left $\O$-modules in terms of finite case.
\end{rem}
\begin{proof}[Proof of Theorem \ref{thm:strc of left O-mod}]
	For the case  $\dim_\spr M< \infty$,  by Corallory \ref{cor:fd O-M}, $M\cong \spo^n \oplus \overline{ \spo}^{n'}$ for some nonnegative integers $(n,n')$. It's easy  to see that there is a basis $\{\epsilon_i\}_{i=1}^{n+n'}\subseteq \huaa{M}\cup \hua{A}{-}{M}$.  As for general case,
	let $S^+$ be a basis of the real vector space $\huaa{M}$ and $S^-$ a  basis of $\hua{A}{-}{M}$, let $S:=S^+\cup S^-\subseteq\huac{M}$. It follows from  Lemma \ref{lem:huaa cap huaa-=0} that $\huaa{M}\cap \hua{A}{-}{M}=\{0\}$,  $S$ is clearly $\spr$-linearly independent. In view of Lemma \ref{lem:free mod unique represt} and Lemma \ref{lem:hua- xishu }, we conclude that $S$ is also $\spo$-linearly independent. We next show that  $\spo{S}=M$. If not, there exists a nonzero element $m\in M\setminus\spo S$. It follows by Lemma \ref{lem:<m> is finite dim} that $\generat{m}_\spo$ has a basis $\{x_i(m)\}_{i=1}^n\subseteq\huaa{\generat{m}_\spo}\cup \hua{A}{-}{\generat{m}_\spo}\subseteq\huaa{M}\cup \hua{A}{-}{M}$, hence we can assume
	$$m=\sum_{i=1}^n r_ix_i(m),\quad r_i\in \spo.$$
	Suppose $x_i(m)=\sum_{j}r_{ij}s_j$, $s_j\in S$, and  it follows from Lemma \ref{lem: xishu asselm} and Lemma \ref{lem:hua- xishu } that $r_{ij}\in \spr$, then we obtain
	$$m=\sum (r_ir_{ij})s_j,\quad r_ir_{ij}\in \spo,\; s_j\in S,$$
	which contradicts our assumption.
	
	Let $N$ denote the $\O$-module $(\oplus_{i\in \Lambda_1}\spo) \bigoplus (\oplus_{i\in \Lambda_2}\overline{\spo})$. Canonically we can choose a basis $\{\epsilon_i\}_{i\in \Lambda_1\cup \Lambda_2}$ and satisfies $\epsilon_i\in \huaa{N}$ when $i\in \Lambda_1$, $\epsilon_j\in \hua{A}{-}{N}$ when $j\in \Lambda_2$. Let $S=\{s_i\}_{i\in \Lambda_1\cup \Lambda_2}$ be a basis of $M$ satisfying the hypothesis in theorem. We  define:
	$$f\colon M\to N, \quad \sum r_is_i\mapsto \sum r_i\epsilon_i.$$
	Then for any $p\in \O$,
	\begin{align*}
	f\left(p\sum_{\Lambda_1\cup \Lambda_2} r_is_i\right)&=f\left(\sum_{\Lambda_1} p(r_is_i)+\sum_{\Lambda_2} p(r_is_i)\right)\\
	&=f\left(\sum_{\Lambda_1} (pr_i)s_i+\sum_{\Lambda_2} (r_ip)s_i\right)\\
	&=\sum_{\Lambda_1} (pr_i)\epsilon_i+\sum_{\Lambda_2} (r_ip)\epsilon_i\\
	&=\sum_{\Lambda_1} p(r_i\epsilon_i)+\sum_{\Lambda_2} p(r_i\epsilon_i)\\
	&=pf\left(\sum_{\Lambda_1\cup \Lambda_2} r_is_i\right)
	\end{align*}
	 This shows $f\in \Hom_\O(M,N)$, similarly we can also define $g\in \Hom_\spo(N,M)$ such that $$fg=\text{Id}_N,\; gf=\text{Id}_M.$$ Hence we get the conclusion as desired.
\end{proof}
\begin{rem}
	In the study of octonion Hilbert space and Banach space, it heavily depends on the  direct sum structure  of the space under considered sometimes, which always brings the question  back to the classic situation.   For example, in the proof of Hanh-Banach Theorem in \cite[Theorem 2.4.1]{ludkovsky2007algebras}, it declares that every $\O$-vector space is of the following form:
	$$X=X_0\oplus X_1e_1\oplus\cdots\oplus X_{7}e_7.$$
  Note that therein the definition of $\O$-vector space is  actually a left $\O$-module with an irrelevant right $\O$-module structure. We thus can only consider the left $\O$-module structure of it. In view of Theorem \ref{thm:strc of left O-mod}, of course the assertion does not always work.

\end{rem}

\subsection{Cyclic elements in left $\spo$-module}


Let $M$ be a left $\spo$-module throughout this subsetion.
The submodule $\left\langle m\right\rangle _{\O}$ generated  by one point  will be very different  from the classical case.  As is known that $\O m$ is not always a submodule (see Example \ref{eg:(e1,e2)=O}, Example \ref{eg:O+O-}).  We introduce the notion of cyclic element, which  generates a simple submodule,  to describe this phenomenon. It turns out that every element is a real linear combination of cyclic elements, although the quantity of cyclic elements is much less than others.


\begin{mydef}\label{def:cir,ass}
	 An element 	$m\in M$ is said to be \textbf{cyclic} if $\left\langle m\right\rangle _\O=\O m$.
%
	Denote by  $\huac{M}$ the set of all cyclic elements in $M$.
	
\end{mydef}

%
%
\begin{prop}\label{prop:cyclic=semiass}
	An element $m$ is cyclic if and only if  for all $ r,p \in \O$, there exists  $q\in \O$, such that $  [r,p,m]=qm $.
\end{prop}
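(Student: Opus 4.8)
The statement characterizes cyclic elements via an internal condition on left associators. I would prove the two implications separately, working throughout inside the finite-dimensional submodule $\generat{m}_\O$, which is legitimate by Lemma \ref{lem:<m> is finite dim}.

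\emph{Necessity.} Suppose $m$ is cyclic, so $\generat{m}_\O=\O m$. For any $r,p\in\O$ the element $[r,p,m]=(rp)m-r(pm)$ lies in $\generat{m}_\O$ by definition of the generated submodule (it is built from iterated left multiplications applied to $m$). Hence $[r,p,m]\in\O m$, i.e.\ there is some $q\in\O$ with $[r,p,m]=qm$. This direction is immediate.

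\emph{Sufficiency.} This is the substantive part. Assume that for all $r,p\in\O$ there exists $q\in\O$ with $[r,p,m]=qm$; I want to conclude $\generat{m}_\O=\O m$. The containment $\O m\subseteq\generat{m}_\O$ is trivial, so the task is to show $\O m$ is already a submodule, i.e.\ that it is closed under left multiplication: $p(sm)\in\O m$ for all $p,s\in\O$. Writing $p(sm)=(ps)m-[p,s,m]$ and using the hypothesis $[p,s,m]=qm$ for a suitable $q$, we get $p(sm)=(ps-q)m\in\O m$. So the hypothesis literally says that $\O m$ is closed under the two-step operation $s,p\mapsto p(sm)$, and since $\generat{m}_\O$ is spanned by all iterated left multiplications $e_{i_1}(e_{i_2}(\cdots(e_{i_k}m)))$ applied to $m$, an induction on the number of multiplications shows every such iterated product lies in $\O m$. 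Concretely: the base case $k\le 1$ is clear; for the inductive step, an element of the form $p(y)$ with $y\in\O m$, say $y=sm$, equals $(ps-q)m\in\O m$ by the computation above. Therefore $\generat{m}_\O\subseteq\O m$, giving equality and hence $m$ cyclic.

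\textbf{Main obstacle.} There is no deep obstacle here; the only point requiring care is the bookkeeping of the induction in the sufficiency direction — one must be sure that \emph{every} generator of $\generat{m}_\O$ is obtained by repeatedly applying single left multiplications $L_{e_i}$ (which is exactly the description used in the proof of Lemma \ref{lem:<m> is finite dim}), so that closure of $\O m$ under one such step, supplied by the hypothesis, propagates to all of $\generat{m}_\O$. The non-associativity means one cannot collapse $p(sm)$ to $(ps)m$ for free, but the hypothesis is precisely the correction term needed, so the argument goes through cleanly.
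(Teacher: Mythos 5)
Your proof is correct and takes essentially the same approach as the paper: the necessity direction is the same rearrangement $[r,p,m]=(rp)m-r(pm)$ with $r(pm)\in\O m$ by cyclicity, and the sufficiency direction is the same observation that the hypothesis makes $\O m$ closed under left multiplication, hence a submodule, hence equal to $\generat{m}_\O$. The only difference is that the paper compresses the sufficiency direction to ``easy to check'' whereas you spell out the closure computation $p(sm)=(ps-q)m$; the extra induction over iterated multiplications is harmless but superfluous, since once $\O m$ is shown to be a submodule containing $m$, the containment $\generat{m}_\O\subseteq\O m$ follows immediately from minimality.
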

\begin{proof}
	Assume $m$ satisfies  the hypothesis above, then it's easy to check that $\O m$ is  a submodule, that is,  $m$ is cyclic.
	Assume $m$ is cyclic, thus for all $ r,p \in \O$, $r(pm)\in \O m$ and hence there exists $ s\in \O$, such that  $r(pm)=sm $, then $[r,p,m]=(rp-s)m$. This proves the lemma.
\end{proof}

Our first observation is the following lemma which is useful sometimes.
\begin{lemma}\label{lem:phuaa in huac}
		Let $M$ be a left $\spo$-module, then
$\bigcup_{p\in \spo}p\cdot \huaa{M}\subseteq\huac{M}$.
		
	\end{lemma}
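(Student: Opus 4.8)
The plan is to show that for any $p \in \spo$ and any associative element $a \in \huaa{M}$, the element $m := pa$ satisfies the criterion for cyclicity given in Proposition~\ref{prop:cyclic=semiass}; that is, for all $r, q \in \spo$ there exists $s \in \spo$ with $[r, q, pa] = s\,(pa)$. So first I would fix $r, q, p \in \spo$ and $a \in \huaa{M}$, and expand $[r, q, pa] = (rq)(pa) - r(q(pa))$. The strategy is to use the identity~\eqref{eq:[p,q,r]m+p[q,r,m]=[pq,r,m]-[p,qr,m]+[p,q,rm]} together with Lemma~\ref{lem: [p,q,rm]=[p,q,r]m}, which says $[x, y, zа] = [x, y, z]a$ for associative $a$; the point is that every associator with $a$ in the last slot collapses to an octonion acting on $a$.

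The key computation: applying Lemma~\ref{lem: [p,q,rm]=[p,q,r]m} directly, $[r, q, pa] = [r, q, p]\,a$ since $pa$ is of the form (octonion)$\cdot$(associative element) — wait, one must be slightly careful: Lemma~\ref{lem: [p,q,rm]=[p,q,r]m} is stated as $[p,q,rm] = [p,q,r]m$ for $m \in \huaa{M}$, which is exactly what is needed with $m = a$, so $[r, q, pa] = [r, q, p]\,a$. Now $[r,q,p]$ is an octonion, call it $t = [r,q,p] \in \spo$. So I must exhibit $s \in \spo$ with $t\,a = s\,(pa)$. If $p \neq 0$, take $s = t\,p^{-1}$; then $s(pa) = (tp^{-1})(pa) = ((tp^{-1})p)a = t\,a$, using that $a$ is associative so $[tp^{-1}, p, a] = 0$. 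If $p = 0$ then $m = 0$ and cyclicity is trivial ($\generat{0}_\spo = \{0\} = \spo\cdot 0$). Hence $m = pa$ is cyclic, i.e.\ $pa \in \huac{M}$, which is the claim.

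I would also note the degenerate inclusion $\huaa{M} \subseteq \huac{M}$ itself (the case $p = 1$), though it follows from the general argument. The main obstacle — really the only subtlety — is handling the non-invertible scalar $p = 0$ and making sure the reduction $t\,a = (tp^{-1})(pa)$ genuinely uses associativity of $a$ in the correct slot: one needs $(xp)a = x(pa)$ for all $x$, which is precisely $[x, p, a] = 0$, valid because $a \in \huaa{M}$. Everything else is a direct application of Lemma~\ref{lem: [p,q,rm]=[p,q,r]m} and Proposition~\ref{prop:cyclic=semiass}, so the proof is short.

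\begin{proof}
	Let $p\in \spo$ and $a\in \huaa{M}$, and set $m=pa$. If $p=0$ then $m=0$ and clearly $\generat{m}_\spo=\{0\}=\spo m$, so $m\in \huac{M}$. Assume $p\neq 0$. For any $r,q\in \spo$, since $a\in \huaa{M}$, Lemma~\ref{lem: [p,q,rm]=[p,q,r]m} gives
	$$[r,q,m]=[r,q,pa]=[r,q,p]\,a.$$
	Put $t=[r,q,p]\in \spo$ and $s=tp^{-1}\in \spo$. Using again that $a$ is associative, $[s,p,a]=0$, whence
	$$s\,m=s(pa)=(sp)a=\big((tp^{-1})p\big)a=t\,a=[r,q,m].$$
	Thus for all $r,q\in \spo$ there is $s\in \spo$ with $[r,q,m]=s m$, so by Proposition~\ref{prop:cyclic=semiass} $m$ is cyclic. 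Therefore $p\cdot a\in \huac{M}$ for every $p\in \spo$ and $a\in \huaa{M}$, which proves $\bigcup_{p\in \spo}p\cdot \huaa{M}\subseteq \huac{M}$.
\end{proof}
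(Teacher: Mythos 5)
Your proof is correct and follows essentially the same route as the paper: reduce $[r,q,pa]$ to $[r,q,p]a$ via Lemma~\ref{lem: [p,q,rm]=[p,q,r]m}, exhibit the coefficient $s=[r,q,p]p^{-1}$, and invoke Proposition~\ref{prop:cyclic=semiass}. Your arrangement (computing $s(pa)=(sp)a=((tp^{-1})p)a=ta$ directly, using $[s,p,a]=0$ and alternativity of $\O$) is a touch cleaner than the paper's insertion of $(p^{-1}p)x$ and cancellation of $[[r,s,p],p^{-1},p]$, and you also handle $p=0$ explicitly, which the paper leaves implicit; but the underlying argument is the same.
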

	
	\begin{proof}
	 Let $0\neq x\in \huaa{M}$, we want to prove that $px\in \huac{M}$.  To see this, take  $r,s\in \spo$ arbitrarily, then using Lemma \ref{lem: [p,q,rm]=[p,q,r]m}, we obtain:
	\begin{align*}
	[r,s,px]&=[r,s,p]x\\&=[r,s,p]((p^{-1}p)x)\\&=([r,s,p]p^{-1})(px)-[[r,s,p],p^{-1},px]\\&=([r,s,p]p^{-1})(px)-[[r,s,p],p^{-1},p]x\\&=([r,s,p]p^{-1})(px)
	\end{align*}
	In view of Proposition \ref{prop:cyclic=semiass}, we thus get $px\in \huac{M}$ as desired.
	\end{proof}
\begin{rem}
	Note that the set $\bigcup_{p\in \spo}p\cdot \huaa{M}$ is not $\O \huaa{M}$. Actually,  $$\O \huaa{M}=\left\lbrace \sum_{i=1}^n p_ix_i\mid p_i\in \O,x_i\in \huaa{M},n\in \mathbb{N}   \right\rbrace .$$ 
	In fact, it is easy to know the sum of two cyclic elements may be not a cyclic element anymore.
\end{rem}
Now let us first consider the cyclic elements in case of  finite dimensional $\O$-modules. It had been seen that $(e_1,e_2,0)$ will generat a $16$ dimensional real space and $(e_1,e_2,e_3)$ will generat a $24$ dimensional real space, which means that  both are   not cyclic elements (see \cite{goldstine1964hilbert}).  In fact, this is a general  phenomenon. In  case $M=\O^2$, we actually have:

\begin{eg}\label{eg:(e1,e2)=O}
	Suppose $x=(x_1,x_2)\in M$, then $\left\langle x\right\rangle _\spo=M$ if and only if  $x_1,x_2$ are  real linearly indenpendent.

	In particular,  in view of the structure of finite dimensional $\O$-modules (see Corollary \ref{cor:fd O-M}), we obtain:
		\begin{enumerate}
		\item  $\huac{\spo^2}=\bigcup_{p\in \spo}p\cdot \R^2$.
		\item An element in $\O^2$ generates the whole space  if and only if it is not cyclic. 
	\end{enumerate}
\end{eg}

	
	\begin{proof}
		Let $x=(x_1,x_2)\notin  \huac{M}$, it is easy to see $x_1,x_2$ are  real linearly indenpendent. Indeed, if $x_1=rx_2$ for some $r\in \R$, then $x=x_2(r,1)$. Note that $(r,1)$ is an associative element of $\O^2$, it  follows from Lemma  \ref{lem:phuaa in huac} that $x\in \huac{M}$, a contradiction. Hence both $x_1,x_2$ are not zero and $(x_1)^{-1}x_2\notin \spr$. Therefore, $$(x_1)^{-1}x=(1,(x_1)^{-1}x_2),\ (x_1)^{-1}x_2\notin \spr.$$ Thus we can choose $p,q\in \spo$, such that $ [p,q,(x_1)^{-1}x_2]\neq0$ (if not, we would have  $[p,q,(x_1)^{-1}x_2]=0$ for any $p,q$, which means $(x_1)^{-1}x_2\in \spr$). However, $$[p,q,(x_1)^{-1}x]=\big([p,q,1],[p,q,(x_1)^{-1}x_2]\big)=\big(0,[p,q,(x_1)^{-1}x_2]\big)\in \left\langle x\right\rangle _\spo,$$ we thus obtain $$\{(0,p)\mid p\in \spo\}\subseteq \left\langle x\right\rangle _\spo.$$
		Similar arguments apply to $(x_2)^{-1}x$, we  get the conclusion as desired.		
	\end{proof}

 An analogous statement holds  for $\conjgt{\O}^2$.
We next consider the case $M=\O\oplus \conjgt{\O}$.

\begin{eg}\label{eg:O+O-}
	$\generat{(1,1)}_\O=\O\oplus \conjgt{\O}$
	\
	
Since	$$e_1(1,1)=(e_1,-e_1),\quad e_2(e_3(1,1))=e_2(e_3,-e_3)=(e_1,e_1),$$
we conclude that both $(e_1,0)$ and $(0,e_1)$ lie in  $\generat{(1,1)}_\O$, which yields that the submodules $\{(0,p)\mid p\in \spo\}$ and $\{(p,0)\mid p\in \spo\}$  both lie in $\generat{(1,1)}_\O$, too. Therefore $\generat{(1,1)}_\O=\O\oplus \conjgt{\O}$.
\end{eg}

In fact, Lemma \ref{lem:phuaa in huac}, Example \ref{eg:(e1,e2)=O}  and Example
\ref{eg:O+O-} are all  specific to a general result that will be proved later.
The following lemma is crucial to set up this result.
\begin{lemma}\label{lem:x in huac(M)=dim =8}
	Let $x$ be any given nonzero element in $M$. Then $$x\in \huac{M}\iff \mathrm{dim}_\spr \generat{x}_\spo=8\iff \generat{x}_\spo\cong \spo \text{ or } \overline{\spo}.$$
\end{lemma}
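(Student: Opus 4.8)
The plan is to establish the chain of equivalences by showing the two nontrivial implications, since the equivalence $\dim_\spr\generat{x}_\spo = 8 \iff \generat{x}_\spo\cong\spo\text{ or }\overline{\spo}$ is essentially immediate from Corollary \ref{cor:fd O-M} (a submodule of real dimension $8$ must be one copy of a simple module, and by Theorem \ref{thm:O mod= Cl7 mod} the only simple left $\spo$-modules are $\spo$ and $\overline{\spo}$; conversely both of these are $8$-dimensional). So the heart of the matter is the equivalence $x\in\huac{M}\iff\dim_\spr\generat{x}_\spo = 8$.

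For the direction $x\in\huac{M}\implies\dim_\spr\generat{x}_\spo = 8$: by definition of cyclic, $\generat{x}_\spo = \spo x$, which is the image of the $\spr$-linear map $\spo\to M$, $p\mapsto px$. Hence $\dim_\spr\generat{x}_\spo\le 8$. To get equality it suffices to show this map is injective, i.e. $px = 0$ with $p\neq 0$ forces $x=0$; but then $x = p^{-1}(px) - [p^{-1},p,x] = -[p^{-1},p,x]$, and since $x$ is cyclic, $[p^{-1},p,x] = qx$ for some $q\in\spo$ by Proposition \ref{prop:cyclic=semiass}, so $x = -qx$, i.e. $(1+q)x = 0$; iterating or choosing $p$ suitably one runs into a contradiction unless $x=0$. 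A cleaner route: $\generat{x}_\spo = \spo x$ is itself an $\spo$-module, nonzero, finite-dimensional (Lemma \ref{lem:<m> is finite dim}), so by Corollary \ref{cor:fd O-M} it is isomorphic to $\spo^{n_1}\oplus\overline{\spo}^{n_2}$ of real dimension $8(n_1+n_2)$; but it is also a quotient of $\spo$ as a real vector space (image of $p\mapsto px$), so $8(n_1+n_2)\le 8$, forcing $n_1+n_2 = 1$ (it is nonzero since $x\neq 0$), hence $\dim_\spr\generat{x}_\spo = 8$.

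For the converse $\dim_\spr\generat{x}_\spo = 8\implies x\in\huac{M}$: again $\generat{x}_\spo$ is a finite-dimensional $\spo$-module of real dimension $8$, so by Corollary \ref{cor:fd O-M} it is $\spo$-isomorphic to $\spo$ or $\overline{\spo}$, i.e. it is a simple submodule generated by $x$. In either model every nonzero element generates the whole module and, crucially, is of the form $p\cdot y$ with $y$ an associative element (in $\spo$) or a conjugate associative element (in $\overline{\spo}$): in the $\spo$-case pick $y=1\in\huaa{\spo}=\spr$ and write $x = (x)\cdot 1$; in the $\overline{\spo}$-case $x$ is $\overline{p}\,$-translate of $1\in\hua{A}{-}{\overline{\spo}}=\spr$. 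Either way, transporting back through the isomorphism, $x = p\cdot z$ with $z\in\huaa{\generat{x}_\spo}$ or $z\in\hua{A}{-}{\generat{x}_\spo}$. By Lemma \ref{lem:phuaa in huac}, $p\cdot z\in\huac{M}$ when $z$ is associative; the conjugate-associative case is handled by the analogous statement (the $\overline{\spo}$-counterpart of Lemma \ref{lem:phuaa in huac}, proved the same way using $(pq)z = q(pz)$), so $x\in\huac{M}$.

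The main obstacle I expect is the bookkeeping in the converse: one must verify that a nonzero generator of a module isomorphic to $\overline{\spo}$ really does lie in $\bigcup_{p}p\cdot\hua{A}{-}{M}$, which requires knowing that every nonzero element of $\overline{\spo}$ is of the form $p\,\hat\cdot\,r$ for some $r\in\spr$ — true since $p\,\hat\cdot\,r = \overline{p}r$ and $\overline\spo\setminus\{0\}$ acts transitively on itself by this — and that $\hua{A}{-}{\cdot}$ is preserved by $\spo$-isomorphism, which is the conjugate analogue of Proposition \ref{prop:f(huaaM) in huaaN} and follows directly from the defining identity $(pq)m = q(pm)$. Establishing (or invoking) the $\overline{\spo}$-version of Lemma \ref{lem:phuaa in huac} is the one genuinely new ingredient; everything else is assembled from Corollary \ref{cor:fd O-M}, Proposition \ref{prop:cyclic=semiass}, and Lemma \ref{lem:<m> is finite dim}.
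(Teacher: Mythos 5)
Your argument is correct, and its skeleton coincides with the paper's: both directions ultimately rest on Corollary \ref{cor:fd O-M}, which forces every nonzero finite-dimensional submodule to have real dimension $8(n_1+n_2)\ge 8$, so that $\generat{x}_\spo=\spo x$ (of dimension $\le 8$, being the image of $p\mapsto px$) pins the dimension at exactly $8$, whence $\generat{x}_\spo$ is simple and isomorphic to $\spo$ or $\overline{\spo}$. (Your first attempt at the forward direction via injectivity of $p\mapsto px$ has a sign slip and never closes, but you rightly discard it for the dimension count, which is exactly the paper's argument.) Where you genuinely diverge is the converse. The paper argues directly from an isomorphism $\varphi:\generat{x}_\spo\to\spo$ (resp.\ $\overline{\spo}$) that every $m\in\generat{x}_\spo$ equals $(qp^{-1})x$ (resp.\ $\overline{(qp^{-1})}\,x$) with $p=\varphi(x)$, $q=\varphi(m)$, using alternativity to see $(qp^{-1})p=q$. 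You instead pull $1\in\huaa{\spo}$ (resp.\ $1\in\hua{A}{-}{\overline{\spo}}$) back through $\varphi$, write $x=p\cdot z$ with $z$ associative or conjugate associative, and invoke Lemma \ref{lem:phuaa in huac} together with its conjugate analogue. That analogue is indeed the one new ingredient you need, and it is in fact easier than Lemma \ref{lem:phuaa in huac} itself: for $z\in\hua{A}{-}{M}$ one has $r(pz)=(pr)z$ outright, so $\spo z$ is already a submodule and $\spo(pz)=\spo z$ for $p\ne 0$ (the paper itself dismisses this as ``easy to show'' inside the proof of Theorem \ref{lem:huac+(M)}). Your route costs you the conjugate versions of Proposition \ref{prop:f(huaaM) in huaaN} and of Lemma \ref{lem:phuaa in huac}, neither stated in the paper at this point but both routine; what it buys is that it essentially establishes the containments $\huacc{\pm}{M}\supseteq\bigcup_{p}p\cdot\huaa{M}$, $\bigcup_{p}p\cdot\hua{A}{-}{M}$ of Theorem \ref{lem:huac+(M)} along the way, making the link between cyclic and (conjugate) associative elements explicit earlier.
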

\begin{proof}
	Let $x\in \huac{M}$, then $\generat{x}_\spo=\spo x$ and hence $\mathrm{dim}_\spr \generat{x}_\spo\leqslant8$. On the other hand, since $\generat{x}_\spo$ is a nonzero  $\spo $-module of finite dimension, thus $\mathrm{dim}_\spr \generat{x}_\spo\geqslant8$, therefore $\mathrm{dim}_\spr \generat{x}_\spo=8$, this means  $ \generat{x}_\spo$ is a simple $\spo$-module and hence $\generat{x}_\spo\cong \spo \text{ or } \overline{\spo}$. Suppose $\generat{x}_\spo\cong \spo \text{ or } \overline{\spo}$. Assume $\generat{x}_\spo\cong \spo $ first. Let $\varphi$ denote an isomorphism: $\varphi:\generat{x}_\spo\rightarrow \spo.$ For any $m\in \generat{x}_\spo$, suppose $\varphi(x)=p,\ \varphi(m)=q$. Then
	$$\varphi(m)=q=qp^{-1}p=qp^{-1}\varphi(x)=\varphi((qp^{-1})x),$$
	according to that $\varphi $ is isomorphism, we  get $m=(qp^{-1})x$, hence $\generat{x}_\spo=\spo x$ which means $x\in \huac{M}$.  If $\generat{x}_\spo\cong \overline{\spo }$,  still let $\varphi$ denote the isomorphism: $\varphi:\generat{x}_\spo\rightarrow \overline{\spo }.$ For any $m\in \generat{x}_\spo$, suppose $\varphi(x)=p,\ \varphi(m)=q$. Then
	$$\varphi(m)=q=\overline{(qp^{-1})}\hat{\cdot}p=\overline{(qp^{-1})}\hat{\cdot}\varphi(x)=\varphi(\overline{(qp^{-1})}x),$$
	then we  get $m=\overline{(qp^{-1})}x$, hence  $x\in \huac{M}$.
\end{proof}

	According to above lemma, we define $$\huacc{+}{M}:=\{x\in \huac{M}\mid \generat{x}_\spo\cong \spo \}\cup\{0\},$$  $$\huacc{-}{M}:=\{x\in \huac{M}\mid \generat{x}_\spo\cong \overline{\spo}\}\cup\{0\}.$$ Therefore
	$\huac{M}=\huacc{+}{M}\cup\huacc{-}{M}.$ We shall show that all the cyclic elements are determined by the associative subset $\huaa{M}$ and the conjugate associative subset $\hua{A}{-}{M}$.

\begin{thm}\label{lem:huac+(M)}
	Let $M$ be a left $\spo$-module, then:
	\begin{enumerate}
		\item $\huacc{+}{M}=\bigcup_{p\in \spo}p\cdot \huaa{M}$;
		\item  $\huacc{-}{M}=\bigcup_{p\in \spo}p\cdot \hua{A}{-}{M}$.
	\end{enumerate}
\end{thm}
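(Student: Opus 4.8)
The plan is to prove the two inclusions in each item, relying on Lemma \ref{lem:x in huac(M)=dim =8} to reduce everything to a statement about the isomorphism type of $\generat{x}_\spo$. For item $(\mathrm{i})$, the inclusion $\bigcup_{p\in\spo}p\cdot\huaa{M}\subseteq\huacc{+}{M}$ I would obtain by combining Lemma \ref{lem:phuaa in huac} (which already gives $p\cdot\huaa{M}\subseteq\huac{M}$) with a verification that $\generat{px}_\spo\cong\spo$ when $0\neq x\in\huaa{M}$: indeed $\generat{x}_\spo=\spo x$, and the map $\spo\to\spo x$, $q\mapsto qx$, is an $\spo$-isomorphism because $x$ is associative (so $q\mapsto qx$ respects the module action on the nose, $r(qx)=(rq)x$), hence $\generat{x}_\spo\cong\spo$; since $\generat{px}_\spo\subseteq\generat{x}_\spo$ is nonzero and $8$-dimensional it equals $\generat{x}_\spo\cong\spo$, so $px\in\huacc{+}{M}$. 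The reverse inclusion is the substantive direction: given $x\in\huacc{+}{M}$ with $x\neq 0$, Lemma \ref{lem:x in huac(M)=dim =8} gives an $\spo$-isomorphism $\varphi\colon\generat{x}_\spo\to\spo$. Set $p=\varphi(x)$ and let $y=\varphi^{-1}(1)$. Then $1=\varphi(y)$ is an associative element of $\spo$ (indeed $1\in\spr=\huaa{\spo}$), and by Proposition \ref{prop:f(huaaM) in huaaN} applied to $\varphi^{-1}$, $y\in\huaa{\generat{x}_\spo}\subseteq\huaa{M}$. Finally $x=\varphi^{-1}(p)=\varphi^{-1}(p\cdot 1)=p\,\varphi^{-1}(1)=p\,y$ — here using that $\varphi^{-1}$ is an $\spo$-homomorphism and $\varphi^{-1}(p\cdot 1)=p\cdot\varphi^{-1}(1)$ — so $x\in p\cdot\huaa{M}$. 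This closes item $(\mathrm{i})$.

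For item $(\mathrm{ii})$, I would run the parallel argument with $\overline{\spo}$ in place of $\spo$. The forward inclusion: if $0\neq x\in\hua{A}{-}{M}$, then I claim $\generat{x}_\spo=\spo x$ and the map $\overline{\spo}\to\spo x$, $q\mapsto \overline{q}x$, is an $\spo$-isomorphism. The key computation is that for $x$ conjugate associative, $r\cdot(\overline{q}x)=(rq)\hat{\cdot}\,\text{?}$ — more precisely one checks $r(\overline{q}x)=\overline{\overline{r}\,\overline{q}}\,x$ using $(pq)x=q(px)$; unwinding, $q\mapsto\overline{q}x$ intertwines the canonical action on $\spo$ with the $\hat\cdot$-action, i.e. it is an isomorphism $\overline{\spo}\to\generat{x}_\spo$, so $\generat{x}_\spo\cong\overline{\spo}$, and as before passing to $px$ preserves this by the dimension-$8$ argument, giving $px\in\huacc{-}{M}$. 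The reverse inclusion: given $x\in\huacc{-}{M}$, $x\neq0$, take an isomorphism $\varphi\colon\generat{x}_\spo\to\overline{\spo}$, put $p=\varphi(x)$, $y=\varphi^{-1}(1)$; then $1\in\spr=\hua{A}{-}{\overline{\spo}}$ by Lemma \ref{lem:hua-(conjgt O)=R}, and an analogue of Proposition \ref{prop:f(huaaM) in huaaN} for conjugate associative elements (which follows by the same trivial computation, since $f((pq)m-q(pm))=(pq)f(m)-q(pf(m))$ for any $\spo$-homomorphism $f$) gives $y\in\hua{A}{-}{M}$. Then $x=\varphi^{-1}(p\cdot 1)=p\,\varphi^{-1}(1)=p\,y\in p\cdot\hua{A}{-}{M}$.

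I expect the main obstacle to be the bookkeeping in the forward inclusion of item $(\mathrm{ii})$: one must be careful that the natural map out of $\overline{\spo}$ landing in $\generat{x}_\spo$ is genuinely $\spo$-linear for the $\hat\cdot$-twisted action, and that conjugates are placed correctly — the identity $(pq)x=q(px)$ has to be applied in the right order, and a sign/conjugation slip there would break the isomorphism. A secondary point worth stating explicitly (rather than leaving implicit) is the small lemma that a nonzero submodule of a simple $\spo$-module is the whole thing, together with the fact that $\generat{x}_\spo$ is finite dimensional by Lemma \ref{lem:<m> is finite dim}, so that ``$8$-dimensional submodule of an $8$-dimensional simple module'' forces equality; this is what lets me pass freely between $x$ and $px$. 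Everything else is a direct transcription of the arguments already used in Lemma \ref{lem:x in huac(M)=dim =8} and Lemma \ref{lem:phuaa in huac}.
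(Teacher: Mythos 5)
Your proposal is correct and follows essentially the same route as the paper: explicit mutually inverse homomorphisms between $\spo$ (resp. $\overline{\spo}$) and $\generat{x}_\spo$ for the forward inclusions, and Proposition \ref{prop:f(huaaM) in huaaN} together with its conjugate-associative analogue, applied to the image of $1$ under the isomorphism, for the reverse inclusions. The only slip is the displayed identity in item $(\mathrm{ii})$: conjugate associativity gives $r(\overline{q}x)=(\overline{q}r)x=\overline{\overline{r}q}\,x$ rather than $\overline{\overline{r}\,\overline{q}}\,x$ --- exactly the bookkeeping point you flagged, and it does not affect the argument since $q\mapsto\overline{q}x$ is indeed the correct intertwiner, as verified in the paper.
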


\begin{proof}
	We prove a\asertion{1}.  We first show $\bigcup_{p\in \spo}p\cdot \huaa{M}\subseteq\huacc{+}{M}$.  Given any $x\in \huaa{M}$. \bfs\ $x\neq 0$.
	Define a map $\phi:\generat{x}_\spo\rightarrow \spo$ such that $ \phi( px)= p$ for $p\in \spo$.
	This is a homomorphism in $\Hom_\spo(\generat{x}_\spo, \spo)$, since $$\phi(q(px))=\phi((qp)x)=qp=q\phi(px).$$
	Define $\varphi:\spo\rightarrow \generat{x}_\spo$ by $ \varphi(p)= px$. Then
	$$\varphi(pq)=(pq)x=p(qx)=p\varphi(q).$$ Hence $\varphi\in \Hom_\spo( \spo,\generat{x}_\spo)$ and $\phi \varphi=id,\varphi\phi=id$ and thus $\generat{x}_\spo\cong \spo$. This proves $x\in \huacc{+}{M}$. Because $px\in \generat{x}_\spo$ and $x=p^{-1}(px)\in \generat{px}_\spo$ for $p\neq 0$, that is, $\generat{x}_\spo=\generat{px}_\spo$ whenever $p\neq 0$. This implies $\bigcup_{p\in \spo}p\cdot \huaa{M}\subseteq\huacc{+}{M}$.
	On the contary, let $0\neq x\in \huacc{+}{M}$, hence there is an isomorphism $\phi\in \Hom_\spo(\spo,\generat{x}_\spo)$. Suppose $\phi (1)=y\in \generat{x}_\spo$, since $\phi$ is an isomorphism, there is $0\neq r\in \spo$ such that $y=rx$. However in view of Proposition \ref{prop:f(huaaM) in huaaN}, $y=\phi (1)\in \huaa{\generat{x}_\spo}\subseteq \huaa{M}$, thus $x=r^{-1}y\in \bigcup_{p\in \spo}p\cdot \huaa{M}$. This proves a\asertion{1}.

	We prove a\asertion{2}. Easy to show that $\generat{x}_\spo=\spo x$ for $x\in \hua{A}{-}{M}$.
Let $x\in \hua{A}{-}{M}$. \bfs\ $x\neq 0$.
Define a map $\phi:\generat{x}_\spo\rightarrow \overline{\spo}$ such that $ \phi( px)=\overline{ p}$ for $p\in \spo$.
This is a homomorphism in $\Hom_\spo(\generat{x}_\spo, \spo)$, since $$\phi(q(px))=\phi((pq)x)=\overline{ pq}=q\hat{\cdot}\overline{ p}=q\phi(px).$$
Define $\varphi:\overline{\spo}\rightarrow \generat{x}_\spo$ by $ \varphi(p)= \overline{p}x$. Then
$$\varphi(p\hat{\cdot}q)=\varphi(\overline{p}q)=\overline{(\overline{ p}q)}x={ p}(\overline{q}x)=p\varphi(q).$$ Hence $\varphi\in \Hom_\spo( \spo,\generat{x}_\spo)$ and $\phi \varphi=id,\varphi\phi=id$ and thus $\generat{x}_\spo\cong \overline{\spo}$. This proves $x\in \huacc{-}{M}$. Hence we conclude from the fact $\generat{x}_\spo=\generat{px}_\spo$ that  $\bigcup_{p\in \spo}p\cdot \hua{A}{-}{M}\subseteq\huacc{-}{M}$.
On the contary, let $0\neq x\in \huacc{-}{M}$, hence there is an isomorphism $\phi\in \Hom_\spo(\overline{\spo},\generat{x}_\spo)$. Suppose $\phi (1)=y\in \generat{x}_\spo$, since $\phi$ is an isomorphism, there is $0\neq r\in \spo$ such that $y=rx$. Note that for any $p,q\in \spo$,
$$(pq)y=(pq)\phi(1)=\phi((pq)\hat{\cdot}1)=\phi(q\hat{\cdot} \overline{p})=q\phi(p\hat{\cdot}1)=q(py).$$
This shows $y\in \hua{A}{-}{M}$ and
 thus $x=r^{-1}y\in \bigcup_{p\in \spo}p\cdot \hua{A}{-}{M}$. This proves a\asertion{2}.

\end{proof}
In view of Theorem \ref{thm:strc of left O-mod}, we conclude an important consequence of the above Theorem:
\begin{cor}\label{cor:M=Span C(M)}
	For each left $\O$-module $M$, we have $$M=\text{Span}_\R\hua{C}{+}{M}\oplus\text{Span}_\R\hua{C}{-}{M}=\text{Span}_\R\huac{M}.$$
\end{cor}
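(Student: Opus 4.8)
The plan is to derive Corollary \ref{cor:M=Span C(M)} as a direct consequence of the structure theorem for left $\O$-modules together with the identification of the cyclic sets established in Theorem \ref{lem:huac+(M)}. First I would recall from Theorem \ref{thm:strc of left O-mod} that $M = \spo\huaa{M} \oplus \spo\hua{A}{-}{M}$, so that every $m \in M$ can be written as $m = \sum_i p_i a_i + \sum_j q_j b_j$ with $p_i, q_j \in \spo$, $a_i \in \huaa{M}$, $b_j \in \hua{A}{-}{M}$. Now by Theorem \ref{lem:huac+(M)}, each $p_i a_i$ lies in $\bigcup_{p\in\spo} p\cdot\huaa{M} = \huacc{+}{M} \subseteq \huac{M}$, and each $q_j b_j$ lies in $\huacc{-}{M} \subseteq \huac{M}$. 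Hence $m$ is a sum (in particular a real linear combination with coefficients $1$) of elements of $\huac{M}$, giving $M = \text{Span}_\R\huac{M}$, and more precisely $M = \text{Span}_\R\hua{C}{+}{M} + \text{Span}_\R\hua{C}{-}{M}$.

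To upgrade the sum to a direct sum, I would argue that $\text{Span}_\R\hua{C}{+}{M} \subseteq \spo\huaa{M}$ and $\text{Span}_\R\hua{C}{-}{M} \subseteq \spo\hua{A}{-}{M}$. This is immediate from Theorem \ref{lem:huac+(M)}: $\hua{C}{+}{M} = \bigcup_{p}p\cdot\huaa{M} \subseteq \spo\huaa{M}$, and the right-hand side is already a real subspace, so taking real spans preserves the inclusion; similarly for the minus part. Since $\spo\huaa{M} \cap \spo\hua{A}{-}{M} = \{0\}$ by the direct sum decomposition in Theorem \ref{thm:strc of left O-mod}, the two spans intersect trivially, so $\text{Span}_\R\hua{C}{+}{M} \oplus \text{Span}_\R\hua{C}{-}{M}$ is genuinely a direct sum. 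Combining with the first paragraph, this common space equals all of $M$ and also equals $\text{Span}_\R\huac{M}$.

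There is really no serious obstacle here; the corollary is a bookkeeping assembly of results already proved. The only point requiring a moment of care is the passage from "every element of $\huac{M}$ lies in $\spo\huaa{M}$ or $\spo\hua{A}{-}{M}$" to the statement about real spans — one must note that $\huac{M} = \hua{C}{+}{M} \cup \hua{C}{-}{M}$ (from Lemma \ref{lem:x in huac(M)=dim =8} and the definitions preceding Theorem \ref{lem:huac+(M)}), so that $\text{Span}_\R\huac{M} = \text{Span}_\R\hua{C}{+}{M} + \text{Span}_\R\hua{C}{-}{M}$, and then the direct sum and the equality with $M$ follow as above. I would keep the proof to a few lines, citing Theorem \ref{thm:strc of left O-mod}, Theorem \ref{lem:huac+(M)}, and Lemma \ref{lem:huaa cap huaa-=0} (or the direct sum in Theorem \ref{thm:strc of left O-mod}) as the inputs.
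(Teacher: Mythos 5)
Your proposal is correct and follows exactly the route the paper intends: the corollary is stated as an immediate consequence of Theorem \ref{thm:strc of left O-mod} combined with Theorem \ref{lem:huac+(M)}, which is precisely your assembly of $M=\spo\huaa{M}\oplus\spo\hua{A}{-}{M}$ with the identifications $\hua{C}{\pm}{M}=\bigcup_{p\in\spo}p\cdot\huaa{M}$ and $\bigcup_{p\in\spo}p\cdot\hua{A}{-}{M}$. Your extra observation that $\text{Span}_\R\hua{C}{\pm}{M}$ sits inside the respective summands (in fact equals them), so the sum is genuinely direct, is the right bookkeeping and matches what the paper leaves implicit.
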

\begin{rem}
	The corollary shows that for any element $m\in M$, there exist some real linear independent cyclic elements $m_i\in \huac{M}$, $i=1,\ldots,n$, such that 
	$$m=\sum_{i=1}^n m_i.$$
	However, this decomposition is not unique.  For example, let $M=\O^3$ and $m=(1,1+e_1,e_1)$. Then choose $m_1=(1,0,0),m_2=(0,1+e_1,0),m_3=(0,0,e_1)$ in $\huac{M}$, it clearly holds $m=\sum_{i=1}^3 m_i$ and they are real linear independent.  On the other hand, we can choose 
	$m_1'=(1,1,0), m_2'=e_1(0,1,1)$ in $\huac{M}$, this is also a decomposition of $m$. 
	It is worth noticing the length of a decomposition of $m$ must be no less than $2$ and no more than $3$. Loosely speaking, the  length of the decomposition of one element reflects the size of the submodule generated by it. The accurate relation deserves further study.

	For any element $m$ in a  left $\O$-module $M$, it follows from  Corollary \ref{cor:M=Span C(M)} that there exist $m^\pm\in \text{Span}_\R\hua{C}{\pm}{M}$ such that $m=m^++m^-$.  We can decomposite $m^{\pm}$ into  a combination of real linearly independent cyclic elements. Denote by $l_m^{\pm}$ the minimal length of the decompositions of $m^{\pm}$. \textbf{We conjecture that,} $$\generat{m}_\O\cong \O^{l_m^+}\oplus\O^{l_m^-}.$$
	It holds true in above example. In fact, in a similar manner as in Example \ref{eg:(e1,e2)=O}, we can prove  $$\generat{(1,1+e_1,e_1)}_\O=\O\cdot (1,1,0)\oplus \O\cdot  e_1(0,1,1)\cong \O^2.$$
	If the conjecture is right, then the structure of the submodule generated by one element is completely clear.
\end{rem}
\bibliographystyle{plain}

\bibliography{refenrence}

\begin{thebibliography}{10}

\bibitem{Alperin2012gtm162}
J.~L. Alperin and Rowen~B. Bell.
\newblock {\em Groups and representations}, volume 162 of {\em Graduate Texts
  in Mathematics}.
\newblock Springer-Verlag, New York, 1995.

\bibitem{Atiyah1964Cliffordmodules}
M.~F. Atiyah, R.~Bott, and A.~Shapiro.
\newblock Clifford modules.
\newblock {\em Topology}, 3(suppl, suppl. 1):3--38, 1964.

\bibitem{baez2002octonions}
John~C. Baez.
\newblock The octonions.
\newblock {\em Bull. Amer. Math. Soc. (N.S.)}, 39(2):145--205, 2002.

\bibitem{colombo2011noncomfunctcalculus}
Fabrizio Colombo, Irene Sabadini, and Daniele~C. Struppa.
\newblock {\em Noncommutative functional calculus}, volume 289 of {\em Progress
  in Mathematics}.
\newblock Birkh\"{a}user/Springer Basel AG, Basel, 2011.
\newblock Theory and applications of slice hyperholomorphic functions.

\bibitem{eilenberg1948extensions}
Samuel Eilenberg.
\newblock Extensions of general algebras.
\newblock {\em Ann. Soc. Polon. Math.}, 21:125--134, 1948.

\bibitem{ghiloni2013slicefct}
Riccardo Ghiloni, Valter Moretti, and Alessandro Perotti.
\newblock Continuous slice functional calculus in quaternionic {H}ilbert
  spaces.
\newblock {\em Rev. Math. Phys.}, 25(4):1350006, 83, 2013.

\bibitem{gilbert1991clifford}
John~E. Gilbert and Margaret A.~M. Murray.
\newblock {\em Clifford algebras and {D}irac operators in harmonic analysis},
  volume~26 of {\em Cambridge Studies in Advanced Mathematics}.
\newblock Cambridge University Press, Cambridge, 1991.

\bibitem{goldstine1964hilbert}
H.~H. Goldstine and L.~P. Horwitz.
\newblock Hilbert space with non-associative scalars. {I}.
\newblock {\em Math. Ann.}, 154:1--27, 1964.

\bibitem{harvey1990spinors}
F.~Reese Harvey.
\newblock {\em Spinors and calibrations}, volume~9 of {\em Perspectives in
  Mathematics}.
\newblock Academic Press, Inc., Boston, MA, 1990.

\bibitem{horwitz1993QHilbertmod}
L.~P. Horwitz and A.~Razon.
\newblock Tensor product of quaternion {H}ilbert modules.
\newblock In {\em Classical and quantum systems ({G}oslar, 1991)}, pages
  266--268. World Sci. Publ., River Edge, NJ, 1993.

\bibitem{jacobson1954structure}
N.~Jacobson.
\newblock Structure of alternative and {J}ordan bimodules.
\newblock {\em Osaka Math. J.}, 6:1--71, 1954.

\bibitem{ludkovsky2007algebras}
S.~V. Ludkovsky.
\newblock Algebras of operators in {B}anach spaces over the quaternion skew
  field and the octonion algebra.
\newblock {\em Sovrem. Mat. Prilozh.}, (35):98--162, 2005.

\bibitem{ludkovsky2007Spectral}
S.~V. Ludkovsky and W.~Spr\"{o}ssig.
\newblock Spectral representations of operators in {H}ilbert spaces over
  quaternions and octonions.
\newblock {\em Complex Var. Elliptic Equ.}, 57(12):1301--1324, 2012.

\bibitem{Sommen2012spinor}
Alan McIntosh.
\newblock Book {R}eview: {C}lifford algebra and spinor-valued functions, a
  function theory for the {D}irac operator.
\newblock {\em Bull. Amer. Math. Soc. (N.S.)}, 32(3):344--348, 1995.

\bibitem{ng2007quaternionic}
Chi-Keung Ng.
\newblock On quaternionic functional analysis.
\newblock {\em Math. Proc. Cambridge Philos. Soc.}, 143(2):391--406, 2007.

\bibitem{razon1992Uniqueness}
A.~Razon and L.~P. Horwitz.
\newblock Uniqueness of the scalar product in the tensor product of quaternion
  {H}ilbert modules.
\newblock {\em J. Math. Phys.}, 33(9):3098--3104, 1992.

\bibitem{razon1991projection}
Aharon Razon and L.~P. Horwitz.
\newblock Projection operators and states in the tensor product of quaternion
  {H}ilbert modules.
\newblock {\em Acta Appl. Math.}, 24(2):179--194, 1991.

\bibitem{rotman2017advancedalg}
Joseph~J. Rotman.
\newblock {\em Advanced modern algebra. {P}art 2}, volume 180 of {\em Graduate
  Studies in Mathematics}.
\newblock American Mathematical Society, Providence, RI, third edition, 2017.
\newblock With a foreword by Bruce Reznick.

\bibitem{schafer2017introduction}
Richard~D. Schafer.
\newblock {\em An introduction to nonassociative algebras}.
\newblock Dover Publications, Inc., New York, 1995.
\newblock Corrected reprint of the 1966 original.

\bibitem{soffer1983quaternion}
A.~Soffer and L.~P. Horwitz.
\newblock {$B^{\ast} $}-algebra representations in a quaternionic {H}ilbert
  module.
\newblock {\em J. Math. Phys.}, 24(12):2780--2782, 1983.

\bibitem{viswanath1971normal}
K.~Viswanath.
\newblock Normal operations on quaternionic {H}ilbert spaces.
\newblock {\em Trans. Amer. Math. Soc.}, 162:337--350, 1971.

\bibitem{wang2014octonion}
Haiyan Wang and Guangbin Ren.
\newblock Octonion analysis of several variables.
\newblock {\em Commun. Math. Stat.}, 2(2):163--185, 2014.

\end{thebibliography}
\end{document}